\title[ABCs for the solution of Heston model]{Artificial boundary method for the solution of pricing European options under the Heston model}
\author[H.S. Li, Z.Y. Huang] {Hongshan Li, Zhongyi Huang$^\dag$}
\address[H.S. Li, Z.Y. Huang]{Dept. of Mathematical Sciences, Tsinghua Univ., Beijing 100084, China}
\email{lhsusan1016@163.com; zhongyih@mail.tsinghua.edu.cn}
\thanks{This work was partially supported by NSFC Project No.~11871298.}
\thanks{$^\dag$ Corresponding author.}
\date{\today}
\subjclass[2010]{Primary: 65N06; 35C20; Secondary: 35K20}
\keywords{Option pricing, Heston model, artificial boundary condition.}
\begin{document}

\maketitle

\begin{abstract}
	This paper considers the valuation of a European call option under the Heston stochastic volatility model. We present the asymptotic solution to the option pricing problem in powers of the volatility of variance. Then we introduce the artificial boundary method for solving the problem on a truncated domain, and derive several artificial boundary conditions (ABCs) on the artificial boundary of the bounded computational domain. A typical finite difference scheme and quadrature rule are used for the numerical solution of the reduced problem. Numerical experiments show that the proposed ABCs are able to improve the accuracy of the results and have a significant advantage over the widely-used boundary conditions by Heston in the original paper \cite{Heston1993}.
	
\end{abstract}

\section{Introduction}

Derivatives play an important role in financial markets and the valuation of derivatives has drawn widespread attention. A derivative is a financial security whose value depends on the price of one or more underlying assets. One example is the European call option which gives the holder the right to buy the underlying asset at a price fixed in advance on the expiration date. Option valuation is mainly based on intuition before Black, Scholes \cite{BlackScholes} and Merton \cite{Merton1973} made a major breakthrough in 1973. In their work, the price of the underlying asset follows a geometric Brownian motion, then a diffusion equation is derived by It\^{o}'s theory which results in an initial boundary value problem for the European option. The solution to the problem known as the Black-Scholes formula has been widely used in industry.

However, due to the strong assumption of constant drift and volatility, the Black-Scholes-Merton (BSM) model fails to capture the skewness and kurtosis of the asset prices and is not able to capture the volatility smile observed from the real market data. Numerous works have been done to develop models with better behaviors, including two main categories: the local volatility and stochastic volatility models. Dupire \cite{dupire1994pricing}, Derman and Kani \cite{derman1994riding} proposed the local volatility model by considering the volatility as a deterministic function that depends on both the stock price and time. It is easy to calibrate to the market data but not capable of capturing the dynamics of the smile by Hagan et al. \cite{hagan2002managing}.
Thus, the stochastic volatility models are usually preferred to the local volatility model in practice. Different stochastic processes have been adopted to describe the volatility and the most commonly used are the Ornstein-Uhlenbeck (OU) process in the Stein-Stein model \cite{stein1991stock} and the Cox-Ingersoll-Ross (CIR) process in the Heston model \cite{Heston1993}. Both of them have good performances on reflecting the features of the real market and have closed-form solutions for the European option. One advantage of the Heston model is that the CIR process can guarantee that the volatility is nonnegative while the volatility may go negative in the OU process in the Stein-Stein model. The Heston model has gained much attention among researchers and practitioners and become one of the most popular methods.

The closed-form solution of the option pricing problem for the European options under the Heston model involves in complex integrals which is not straightforward to calculate. As a result, some researchers have tried to perform asymptotic analysis to get a better understanding of the solution. Fouque et al. \cite{fouque2000derivatives} constructed a framework to study the asymptotic behavior of stochastic volatility models. Han et al. \cite{Asym2HanJiguang} presented an asymptotic expansion of the option price for a fast mean-reverting process in the Heston model and then Zhang et al. \cite{zhang2013option} pointed out that for a slow mean-reverting process the asymptotic solution has the same expression as in \cite{Asym2HanJiguang}.

However, the asymptotic solution can be applied only when the parameters in the Heston model such as the variance of volatility and the rate of mean reversion are small or large enough, and a lot of calculation may be needed to meet the accuracy requirements.
Besides, the closed-form or asymptotic solutions are difficult to derive for some exotic options or options with early exercise features. In order to give a consistent approach to the problem which is independent of the parameters and can be extended to a wider range of options, a number of numerical methods have been investigated to overcome the limitations.

Similar to the option valuation problem under the BSM model,  lattice approaches \cite{finucane1996american}\cite{leisen2000stock}\cite{vellekoop2009tree}, Monte Carlo simulation methods \cite{broadie2006exact}\cite{smith2007almost}\cite{andersen2008simple} and the FFT-based methods \cite{carr1999option}\cite{fang2008novel} have been developed to price options under the Heston model. Besides, since the Heston model results in a 2D convection-diffusion equation with a mixed 2nd-order derivative, finite difference methods and finite element methods have been adopted and improved, such as the Crank-Nicolson scheme \cite{zvan2000pde}, the alternating direction implicit schemes \cite{foulon2010adi}, the method of lines \cite{chiarella2012evaluation},  a mixture of standard Galerkin finite element method and finite volume method \cite{zvan1998penalty}
and the spectral element method \cite{zhu2010spectral}. In comparison, finite difference methods are more interpretable and flexible and are easy to implement requiring fewer pretreatment with lower computational cost.

It is noteworthy that the corresponding initial boundary value problem is defined on an infinite domain that needs to be handled carefully. In computational mathematics, alternative approaches for solving partial differential equations on unbounded domains include domain truncation\cite{engquist1977absorbing}\cite{antoine2007review}, coordinate transforms \cite{grosch1977numerical}, infinite element methods \cite{bettess1977infinite} and spectral methods\cite{boyd2001chebyshev}\cite{shen2009some}. Among these, the mainstream method is to truncate the domain and introduce an artificial boundary condition. The method was first developed by Engquist and Majda \cite{engquist1977absorbing} in 1977 to obtain an absorbing boundary condition for the wave equation. Since then, it has been applied to a number of equations in physics, finance and other areas \cite{han2002class}\cite{wong2008artificial}. As for the option valuation problem, Han and Wu \cite{HanWu} proposed a method to give an ABC for the BSM model. Unfortunately, it is difficult to impose a proper ABC for the problem under the Heston model because of its complexity. What is commonly used is the boundary condition at infinity by Heston in the original paper \cite{Heston1993}. Obviously, it could cause high inaccuracy.

 In this paper, we present an asymptotic solution in terms of the volatility of variance in the Heston model and it can be viewed as a generalized form of the results of Han et al. \cite{Asym2HanJiguang} and Zhang et al. \cite{zhang2013option} since they need assumptions about another parameter in addition. We obtain some properties of the option price as well based on asymptotic analysis. As for the numerical method, we choose the Samarskii scheme and the central difference scheme for the two directions to achieve 2nd-order accuracy. We use the backward Euler scheme for the first time step considering the nonsmoothness of the initial condition and then switch to the Crank-Nicolson scheme for the subsequent steps. Numerical results show that the finite difference scheme performs well enough to solve the initial boundary value problem for option pricing. To deal with the infinite domain, we try to give an approximation of the original problem with the asymptotic properties of the option price and derive an approximate ABC for the Heston model based on the approach of Han and Wu \cite{HanWu}. Then we make modifications and get another two ABCs to improve the accuracy. The Heston problem involves in an additional dimension and cause the violation of the compact support of the initial value and the source term which is often required in the artificial boundary methods, so that the modified artificial boundary conditions are not straightforward to derive. We make proper assumptions to change the problem into a common form and apply curve fitting approaches to solve the difficulties. Numerical experiments turn out that the proposed ABCs work remarkably better compared with the boundary condition by Heston in the original paper. For different sets of parameters including both large and small volatility of variance, the proposed ABCs can help to achieve considerably higher accuracy especially when the grid size gets smaller. With the original boundary condition by Heston, a much larger computational domain is needed to make the error of the solution close to those using the proposed ABCs. Besides, when we calculate the Greeks such as Delta, Gamma and Vega which are widely used in industry for trading and hedging, the proposed ABCs can reduce the error significantly and show the big advantage of the artificial boundary method in practice.

The paper is organized as follows. Section \ref{sec:Heston} introduces the Heston stochastic volatility model for option pricing and the initial boundary value problem for the European option. Section \ref{sec:asy} presents the asymptotic solution in powers of the volatility of variance in the Heston model and several useful properties of the option price based on asymptotic analysis. In Section \ref{sec:abc}, we construct an approximate artificial boundary condition and give two modified ABCs for the option valuation problem. Section \ref{sec:fdm} gives the finite difference scheme for the problem especially for the ABCs proposed in the previous section. Numerical results show the advantages of our ABCs compared with the boundary condition by Heston in Section \ref{sec:num}. We conclude in Section \ref{sec:con}.


\section{The Heston stochastic volatility model}\label{sec:Heston}

Under the Heston stochastic volatility model, the stock price $S_t$ is characterized by the following stochastic differential equation (SDE) under the risk neural probability measure:
\begin{equation}
dS_t=rS_tdt + \sqrt{v_t}S_tdz_{1,t},
\label{eq:stockSDE}
\end{equation}
where $\sqrt{v_t}$ represents the volatility governed by
\begin{equation}
dv_t=\kappa\left(\eta-v_t\right)dt+\sigma\sqrt{v_t}dz_{2,t}.
\label{eq:volSDE}
\end{equation}
Here, $z_{1,t}$, $z_{2,t}$ are two standard Brownian motions satisfying $E\left(dz_{1,t}dz_{2,t}\right)=\rho dt$. The parameter $r$ is the risk free rate of interest, $\kappa$ is the mean reversion rate, $\eta$ is the mean variance and $\sigma$ is the volatility of variance.

Denote the value of a financial derivative as $U(S,v,t)$, then by It\^{o}'s Lemma we arrive at the Garman's PDE:
\begin{equation}
\frac{1}{2}vS^2\frac{\partial^2U}{\partial S^2}+\rho\sigma vS\frac{\partial^2U}{\partial S\partial v}+\frac{1}{2}\sigma^2v\frac{\partial^2U}{\partial v^2}+rS\frac{\partial U}{\partial S}+\left[\kappa\left(\eta-v\right)-\lambda(S,v,t)\right]\frac{\partial U}{\partial v}
-rU+\frac{\partial U}{\partial t}=0.
\label{eq:HestonPDE}
\end{equation}
Here, $\lambda(S,v,t)$ is the price of volatility risk and in this paper we simply set it to $0$ without loss of generality.

Consider a European option with a strike price $K$ and a maturity time $T$, then $U(S,v,t)$ satisfies \eqref{eq:HestonPDE} on the domain
\begin{equation}
0<S<+\infty,\quad 0<v<+\infty,\quad 0\leq t<T,
\label{eq:HestonDomain}
\end{equation}
with the terminal condition
\begin{equation}
U(S,v,T)=\left(S-K\right)^+,\quad 0\leq S<+\infty, \quad 0\leq v<+\infty,
\label{eq:HestonIC}
\end{equation}
and the boundary conditions by Heston \cite{Heston1993},
\begin{equation}
\begin{aligned}
U(0,v,t) =0,\quad 0\leq v<+\infty, \quad 0\leq t<T,\\
\frac{\partial U}{\partial S}(S,v,t) \to1 \quad as \quad S\to +\infty,\quad 0\leq v<+\infty, \quad 0\leq t<T,\\
rS\frac{\partial U}{\partial S}(S,0,t)+\kappa\eta\frac{\partial U}{\partial v}(S,0,t)-rU(S,0,t)+\frac{\partial U}{\partial t}(S,0,t)=0,
\quad 0\leq S<+\infty, \quad 0\leq t<T,\\
U(S,v,t)\to S \quad as \quad v \to +\infty,\quad 0\leq S<+\infty, \quad 0\leq t<T.
\end{aligned}
\label{eq:HestonBCs}
\end{equation}
Assuming that the Feller condition,
\[\frac{1}{2}\kappa\eta\geq\sigma^2,\]
is satisfied, the volatility $\sqrt{v_t}$ never reaches zero and we are not supposed to put any boundary condition on $v=0$ according to the Fichera theory. In fact, no boundary condition is needed on $S=0$ either. However, the boundary conditions on both $S=0$ and $v=0$ in \eqref{eq:HestonBCs} come naturally from the original equation \eqref{eq:HestonPDE} and the terminal condition \eqref{eq:HestonIC}.

Let
\begin{equation}
\tau=T-t,\quad \tilde{S}=\frac{Se^{r\tau}}{K},\quad V(\tilde{S},v,\tau)=\frac{U(S,v,t)e^{r\tau}}{K},
\label{eq:HestonTrans}
\end{equation}
then the problem \eqref{eq:HestonPDE}-\eqref{eq:HestonBCs} is equivalent to
\begin{eqnarray}
\frac{1}{2}v\tilde{S}^2\frac{\partial^2V}{\partial \tilde{S}^2}+\rho\sigma v\tilde{S}\frac{\partial^2V}{\partial \tilde{S}\partial v}+\frac{1}{2}\sigma^2v\frac{\partial^2V}{\partial v^2}+\kappa\left(\eta-v\right)\frac{\partial V}{\partial v}=\frac{\partial V}{\partial \tau},\nonumber\\
0<\tilde{S}<+\infty,\quad 0<v<+\infty,\quad 0<\tau\leq T,
\label{eq:HestonProfirst}\\
V(\tilde{S},v,0)=(\tilde{S}-1)^+,\quad 0\leq\tilde{S}<+\infty,\quad 0\leq v<+\infty,
\label{eq:HestonProsecond}\\
V\left(0,v,\tau\right)=0,\quad 0\leq v<+\infty,\quad 0<\tau\leq T,\\
\frac{\partial V}{\partial \tilde{S}}(\tilde{S},v,\tau) \to1 \quad as \quad \tilde{S}\to +\infty,\quad 0\leq v<+\infty,\quad 0<\tau\leq T,\\
\kappa\eta\frac{\partial V}{\partial v}(\tilde{S},0,\tau)=\frac{\partial V}{\partial \tau}(\tilde{S},0,\tau),\quad 0\leq \tilde{S}<+\infty,\quad 0<\tau\leq T,\\
V(\tilde{S},v,\tau)\to \tilde{S} \quad as \quad v \to +\infty,\quad 0\leq \tilde{S}<+\infty,\quad 0<\tau\leq T.
\label{eq:HestonProlast}
\end{eqnarray}

\section{Asymptotic analysis for the option pricing problem}\label{sec:asy}

Considering $\sigma \rightarrow 0^+$, we find the asymptotic solution to the problem \eqref{eq:HestonProfirst}-\eqref{eq:HestonProlast} presented in the following theorem:
\newtheorem{theorem}{Theorem}[section]
\begin{theorem}
\label{thm1}
\label{thmAsymp}
The solution $V(\tilde{S},v,\tau)$ to the problem \eqref{eq:HestonProfirst}-\eqref{eq:HestonProlast} has the following asymptotic expansion in powers of $\sigma$:
\begin{equation}
V(\tilde{S},v,\tau)=V_0(\tilde{S},v,\tau)+\sigma V_1(\tilde{S},v,\tau)+\sigma^2 V_2(\tilde{S},v,\tau)+O(\sigma^3),\quad \sigma\rightarrow 0^+,
\label{eq:HestonAsymall}
\end{equation}
where
\begin{equation}
V_0(\tilde{S},v,\tau)=\tilde{S}\cdot N(d^+)-N(d^-),
\label{eq:HestonAsym0}
\end{equation}
\begin{equation}
V_1(\tilde{S},v,\tau)=-\frac{\rho}{2\kappa}F(v,\tau;\kappa,\eta)z^{-1}d^{-}\phi(d^{-}),
\label{eq:HestonAsym1}
\end{equation}
\begin{equation}
\begin{aligned}
V_2(\tilde{S},v,\tau)=&\frac{1}{4\kappa^2}\bigg\{G(v,\tau;\kappa,\eta)z^{-1}d^{-}+G(v,\tau;\kappa,\eta)z^{-\frac{3}{2}}\left[-1+(d^-)^2\right]+\rho^2J(v,\tau;\kappa,\eta)z^{-\frac{3}{2}}\left[-1+(d^-)^2\right]\\
&+\rho^2H(v,\tau;\kappa,\eta)z^{-2}\left[-3d^-+(d^{-})^{3}\right]+\rho^2H(v,\tau;\kappa,\eta)z^{-\frac{5}{2}}\left[3-6(d^-)^2+(d^-)^4\right]\bigg\}.
\end{aligned}
\label{eq:HestonAsym2}
\end{equation}
Here, $N(x)$ and $\phi(x)$ are the cumulative distribution function and the density of the standard normal distribution respectively and
\[d^{\pm}=\frac{\ln\tilde{S}\pm\frac{1}{2}z}{\sqrt{z}},\quad z=\eta\tau+\left(v-\eta\right)\frac{1-e^{-\kappa\tau}}{\kappa},\]
\[F(v,\tau;\kappa,\eta)=
\left(1-\frac{\kappa e^{-\kappa\tau}}{1-e^{-\kappa\tau}}\tau\right)z+\eta\left(\frac{\kappa e^{-\kappa\tau}}{1-e^{-\kappa\tau}}\tau^2-\frac{1-e^{-\kappa\tau}}{\kappa}\right),\]
\[
G(v,\tau;\kappa,\eta)=-\left(\frac{\kappa e^{-\kappa\tau}}{1-e^{-\kappa\tau}}\tau+\frac{1-e^{-\kappa\tau}}{2}-1\right)z
+\eta\left[\frac{\kappa e^{-\kappa\tau}}{1-e^{-\kappa\tau}}\tau^2-\frac{e^{-\kappa\tau}}{2}\tau-\frac{\left(1-e^{-\kappa\tau}\right)^2+2\left(1-e^{-\kappa\tau}\right)}{4\kappa}\right],
\]
\[H(v,\tau;\kappa,\eta)=\frac{1}{2}\left[F(v,\tau;\kappa,\eta)\right]^2,\]
\[
J(v,\tau;\kappa,\eta)=\left(-\frac{\kappa^2e^{-\kappa\tau}}{1-e^{-\kappa\tau}}\tau^2-2\frac{\kappa e^{-\kappa\tau}}{1-e^{-\kappa\tau}}\tau+2\right)z
+\eta\left(\frac{\kappa^2 e^{-\kappa\tau}}{1-e^{-\kappa\tau}}\tau^3+2\frac{\kappa e^{-\kappa\tau}}{1-e^{-\kappa\tau}}\tau^2+2e^{-\kappa\tau}\tau-4\frac{1-e^{-\kappa\tau}}{\kappa}\right).
\]
\end{theorem}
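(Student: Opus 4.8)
The plan is to treat \eqref{eq:HestonProfirst}--\eqref{eq:HestonProlast} by a regular perturbation expansion in $\sigma$. Writing the operator as $L_\sigma = L_0 + \sigma L_1 + \sigma^2 L_2$ with
$$L_0 = \tfrac12 v\tilde S^2\partial_{\tilde S\tilde S} + \kappa(\eta-v)\partial_v - \partial_\tau, \quad L_1 = \rho v\tilde S\,\partial_{\tilde S v}, \quad L_2 = \tfrac12 v\,\partial_{vv},$$
I would substitute the ansatz \eqref{eq:HestonAsymall} into $L_\sigma V = 0$ and collect equal powers of $\sigma$. This yields the hierarchy $L_0 V_0 = 0$, $L_0 V_1 = -L_1 V_0$, and $L_0 V_2 = -L_1 V_1 - L_2 V_0$. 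Since $V_0$ will already match the payoff, the initial conditions are $V_0(\tilde S,v,0) = (\tilde S-1)^+$ and $V_1(\tilde S,v,0) = V_2(\tilde S,v,0) = 0$; the boundary data are then checked to hold order by order, which is immediate for $V_0$ (a Black--Scholes price) and for the corrections because each carries a Gaussian factor $\phi(d^-)$ that vanishes as $\tilde S\to0$ and whose $\tilde S$-derivative vanishes as $\tilde S\to+\infty$.

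Next I would introduce the accumulated deterministic variance $z = \eta\tau + (v-\eta)\frac{1-e^{-\kappa\tau}}{\kappa}$, which arises because the variance SDE \eqref{eq:volSDE} degenerates to $dv = \kappa(\eta-v)\,d\tau$ at $\sigma=0$. A direct computation gives $z_\tau = \eta+(v-\eta)e^{-\kappa\tau}$, $z_v = \frac{1-e^{-\kappa\tau}}{\kappa}$, $z_{vv}=0$, together with the algebraic identity $v + \kappa(\eta-v)z_v - z_\tau = 0$. Using the classical fact that the forward Black--Scholes price $C(\tilde S, z) = \tilde S N(d^+) - N(d^-)$, viewed as a function of total variance $z$, solves the heat equation $\partial_z C = \tfrac12\tilde S^2\partial_{\tilde S\tilde S}C$, the above identity shows at once that $V_0(\tilde S,v,\tau) = C(\tilde S, z)$ satisfies $L_0 V_0 = 0$ and that $V_0 \to (\tilde S-1)^+$ as $z\to0$, establishing \eqref{eq:HestonAsym0}. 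The same computation proves the \emph{key reduction lemma}: if $D(\tilde S, z)$ solves $\partial_z D = \tfrac12\tilde S^2\partial_{\tilde S\tilde S}D$ and $p=p(v,\tau)$, then
$$L_0\big[p(v,\tau)\,D(\tilde S,z)\big] = \big[\kappa(\eta-v)\,p_v - p_\tau\big]\,D.$$
This collapses each inhomogeneous $L_0$-solve into a first-order linear transport problem for the scalar coefficient $p$.

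With this lemma the first correction is routine. I would compute $-L_1 V_0$ using the Greeks of $C$ and the identity $\tilde S\phi(d^+)=\phi(d^-)$, which reveals that the source has the separated form $q_1(v,\tau)\,D_1$ with $D_1 = z^{-1}d^-\phi(d^-) \propto \partial_{\ln\tilde S}\partial_z V_0$ a heat solution. Writing $V_1 = p_1(v,\tau)D_1$ and applying the lemma reduces the problem to $\kappa(\eta-v)\partial_v p_1 - \partial_\tau p_1 = q_1$ with $p_1(v,0)=0$; solving this by the method of characteristics (whose curves are $v(\tau') = \eta+(v-\eta)e^{\kappa(\tau'-\tau)}$, $0\le\tau'\le\tau$) yields $p_1 = -\frac{\rho}{2\kappa}F$ with $F$ the stated function, giving \eqref{eq:HestonAsym1}.

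The second-order term is where the real work lies. Expanding $-L_1 V_1 - L_2 V_0$ requires differentiating the product $p_1(v,\tau)D_1(\tilde S,z)$, where $\partial_v$ acts both on $p_1$ and, through $z$, on $D_1$; together with $L_2 V_0 = \tfrac12 v z_v^2\,\partial_z^2 V_0$ this produces a finite sum $\sum_j q_j(v,\tau)D^{(j)}$ over the heat-solution basis $z^{-(k+1)/2}\mathrm{He}_k(d^-)\phi(d^-)$, $k=1,\dots,4$, i.e.\ exactly the structures $z^{-1}d^-$, $z^{-3/2}[-1+(d^-)^2]$, $z^{-2}[-3d^-+(d^-)^3]$, $z^{-5/2}[3-6(d^-)^2+(d^-)^4]$ appearing in \eqref{eq:HestonAsym2}. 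By the lemma each coefficient solves its own transport equation with zero initial data, and the repeated $\tau$-integration is what forces the combinations $H=\tfrac12 F^2$ and the grouping in which $G$ and $\rho^2 H$ each appear against two different basis functions. The main obstacle is precisely this bookkeeping: correctly generating and collecting all contributions in the Hermite basis and carrying out the several transport integrals to land on $G$, $J$, $H$ without error. Finally, I would justify the $O(\sigma^3)$ remainder by inserting the truncated expansion into $L_\sigma$ and bounding the residual through a maximum-principle (or energy) estimate, so that \eqref{eq:HestonAsymall} is a genuine asymptotic expansion.
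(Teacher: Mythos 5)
Your proposal is correct and takes essentially the same route as the paper: the same perturbation hierarchy in powers of $\sigma$, the same reduction to a heat equation through the accumulated variance $z$ (the paper's $\gamma=\tfrac12 z$), and the same separated ansatz $V_1=p_1(v,\tau)\,z^{-1}d^-\phi(d^-)$ whose coefficient solves a first-order transport equation along the curves $\xi=(v-\eta)e^{-\kappa\tau}=\mathrm{const}$, which are exactly your characteristics. Your ``key reduction lemma'' is an abstract repackaging of the paper's explicit change of variables, and your concluding maximum-principle justification of the $O(\sigma^3)$ remainder goes slightly beyond the paper, whose expansion is carried out formally.
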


\begin{proof}
We first substitute the asymptotic expansion of $V(\tilde{S},v,\tau)$ w.r.t $\sigma$ in \eqref{eq:HestonAsymall} into the PDE \eqref{eq:HestonProfirst} and the initial condition \eqref{eq:HestonProsecond}. Since the coefficients of different powers of $\sigma$ should be zero, we get the following equations for $V_0(\tilde{S},v,\tau)$, $V_1(\tilde{S},v,\tau)$ and $V_2(\tilde{S},v,\tau)$:
\begin{eqnarray}
\label{eq:HestonAsymPDE0}
\frac{1}{2}v\tilde{S}^2\frac{\partial^2V_0}{\partial \tilde{S}^2}+\kappa\left(\eta-v\right)\frac{\partial V_0}{\partial v}=\frac{\partial V_0}{\partial \tau},\quad 0<\tilde{S}<+\infty,\quad 0<v<+\infty,\quad 0<\tau\leq T,\nonumber\\
V_0(\tilde{S},v,0)=(\tilde{S}-1)^+,\quad 0\leq\tilde{S}<+\infty,\quad 0\leq v<+\infty,\\
\label{eq:HestonAsymPDE1}
\frac{1}{2}v\tilde{S}^2\frac{\partial^2V_1}{\partial \tilde{S}^2}+\rho v\tilde{S}\frac{\partial^2V_0}{\partial \tilde{S}\partial v}+\kappa\left(\eta-v\right)\frac{\partial V_1}{\partial v}=\frac{\partial V_1}{\partial \tau},\quad 0<\tilde{S}<+\infty,\quad 0<v<+\infty,\quad 0<\tau\leq T,\nonumber\\
V_1(\tilde{S},v,0)=0,\quad 0\leq\tilde{S}<+\infty,\quad 0\leq v<+\infty,\\
\label{eq:HestonAsymPDE2}
\frac{1}{2}v\tilde{S}^2\frac{\partial^2V_2}{\partial \tilde{S}^2}+\rho v\tilde{S}\frac{\partial^2V_1}{\partial \tilde{S}\partial v}+\frac{1}{2}v\frac{\partial^2V_0}{\partial v^2}+\kappa\left(\eta-v\right)\frac{\partial V_2}{\partial v}=\frac{\partial V_2}{\partial \tau},\quad 0<\tilde{S}<+\infty,\quad 0<v<+\infty,\quad 0<\tau\leq T,\nonumber\\
V_2(\tilde{S},v,0)=0,\quad 0\leq\tilde{S}<+\infty,\quad 0\leq v<+\infty.
\end{eqnarray}
To solve the problem \eqref{eq:HestonAsymPDE0}, we introduce the following change of variables:
\[x=\ln \tilde{S},\quad \xi=(v-\eta)e^{-\kappa\tau}, \quad \gamma=\frac{1}{2}\left[\eta\tau+\xi\frac{e^{\kappa\tau}-1}{\kappa}\right]=\frac{1}{2}\left[\eta\tau+\left(v-\eta\right)\frac{1-e^{-\kappa\tau}}{\kappa}\right]:=\frac{1}{2}z,\]
\[W_0(x,\xi,\gamma)=V_0(\tilde{S},v,\tau)\cdot e^{-\frac{1}{2}x+\frac{1}{4}\gamma}.\]
Then the problem \eqref{eq:HestonAsymPDE0} is equivalent to
\begin{equation}
\begin{aligned}
\frac{\partial^2W_0}{\partial x^2}=\frac{\partial W_0}{\partial \gamma},\quad -\infty<x<+\infty,\quad 0<\eta+\xi e^{\kappa\tau}<+\infty,\quad 0<\tau\leq T,\\
W_0(x,\xi,\gamma)=(e^{\frac{1}{2}x}-e^{-\frac{1}{2}x})^+,\quad -\infty<x<+\infty,\quad 0\leq \eta+\xi e^{\kappa\tau}<+\infty.\\
\end{aligned}
\label{eq:HestonAsym0simp}
\end{equation}
The solution to the problem \eqref{eq:HestonAsym0simp} is given by
\begin{equation}
\tilde{W_0}(x,\xi,\gamma)=\frac{1}{\sqrt{4\pi\gamma}}\int_{-\infty}^{+\infty}e^{-\frac{(x-y)^2}{4\gamma}}(e^{\frac{1}{2}y}-e^{-\frac{1}{2}y})^+dy=e^{\frac{1}{4}\gamma}\left[e^{\frac{x}{2}}N\left(\frac{x+\gamma}{\sqrt{2\gamma}}\right)-e^{-\frac{x}{2}}N\left(\frac{x-
\gamma}{\sqrt{2\gamma}}\right)\right].
\label{eq:HestonAsym0comp}
\end{equation}
By transforming the variables back, we can derive the expression of $V_0(x,v,\tau)$ in \eqref{eq:HestonAsym0}.

Next, to solve the problem \eqref{eq:HestonAsymPDE1}, we substitute \eqref{eq:HestonAsym0} into it and let
\[x=\ln \tilde{S},\quad \xi=(v-\eta)e^{-\kappa\tau},\]
\[V_1(x,v,\tau)=W_1(v,\tau)\cdot d^-\phi(d^-),\]
then \eqref{eq:HestonAsymPDE1} can be transformed into
\begin{equation}
\begin{aligned}
\frac{\partial}{\partial \tau}\left[\left(\eta\tau+\xi\frac{e^{\kappa\tau}-1}{\kappa}\right)W_1\right]+(\eta+\xi e^{\kappa\tau})\rho\frac{1-e^{-\kappa\tau}}{2\kappa}=0,\quad
-\eta<\xi<+\infty,\quad 0<\tau\leq T,\\
W_1(\xi,0)=0,\quad -\eta\leq \xi<+\infty,
\label{eq:HestonAsym1inside2}
\end{aligned}
\end{equation}
with the solution
\begin{equation}
W_1(\xi,\tau)=-\frac{\rho}{2\kappa}z^{-1}\left[(\eta-\xi)\tau+\xi\frac{e^{\kappa\tau}-1}{\kappa}-\eta\frac{1-e^{-\kappa\tau}}{\kappa}\right]=-\frac{\rho}{2\kappa}z^{-1}F(v,\tau;\kappa,\eta).
\end{equation}
Then we can easily get the solution to the problem \eqref{eq:HestonAsymPDE1} in \eqref{eq:HestonAsym1}.

Besides, the expression of $V_2(\tilde{S},v,\tau)$ in \eqref{eq:HestonAsym2} can be derived similarly as above.
\end{proof}
\newtheorem{remark}{Remark}[section]
\begin{remark}
If we replace $\kappa$ and $\sigma$ by $\delta\kappa$ and $\sqrt{\delta}\sigma$ in Theorem \ref{thm1} with $\delta\rightarrow0^+$, we can get the asymptotic solution consistent with the results in \cite{Asym2HanJiguang} and \cite{zhang2013option}.
\end{remark}

We make the inverse transformation of \eqref{eq:HestonTrans} on $V_0(\tilde{S},v,\tau)$ in \eqref{eq:HestonAsym0} and denote $U_0(S,v,t)$ the derived expression:
\begin{equation}
U_0(S,v,t)=S\cdot N(d^+)-Ke^{-r(T-t)}N(d^-),\\
\label{eq:HestonAsym0U}
\end{equation}
where
\[d^\pm=\frac{\ln{\frac{S}{K}}\pm\frac{1}{2}z}{\sqrt{z}},\quad z=\eta(T-t)+(v-\eta)\frac{1-e^{-\kappa(T-t)}}{\kappa}.\]
According to Theorem \ref{thm1}, the option price $U(S,v,t)$ converges to $U_0(S,v,t)$ as $\sigma\rightarrow0^+$.

We take into account an asset $S_t^{(0)}$ with the dynamic specified as in \eqref{eq:stockSDE} and the variance $v_t^{(0)}$ as in \eqref{eq:volSDE} but with $\sigma$ set to 0, i.e.
\[dv_t^{(0)}=\kappa(\eta-v_t^{(0)})dt,\]
or
\begin{equation}
v_t^{(0)}=\eta+(v_0^{(0)}-\eta)e^{-\kappa t},
\end{equation}
where $v_0^{(0)}$ is a constant that represents the starting point of the variance $v_t^{(0)}$. Note that $v_t^{(0)}$ is here a function of $t$ but not a stochastic process as before. And
\begin{equation}
dS_t^{(0)}=rS_t^{(0)}dt+\sqrt{\eta+(v_0^{(0)}-\eta)e^{-\kappa t}}S_t^{(0)}dz_t^{(0)}.
\label{eq:HestonSDE0}
\end{equation}
Here, $z_t^{(0)}$ is a standard Brownian motion.

Consider a European option $U^{(0)}$ with the underlying asset $S_t^{(0)}$
and the same strike price and maturity time as $U(S,v,t)$, then $U^{(0)}$ should only be related to $S$ and $t$ but not $v$. Then we have the following theorem:
\begin{theorem}
 $U^{(0)}(S,t)=U_0(S,\eta+(v_0^{(0)}-\eta)e^{-\kappa t},t)$.
\end{theorem}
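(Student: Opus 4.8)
The plan is to identify $U^{(0)}(S,t)$ as the unique solution of a one-factor generalized Black--Scholes terminal value problem, and then to verify that $U_0(S,v_t^{(0)},t)$ solves exactly that problem. First I would derive the PDE for $U^{(0)}$. Because we have set $\sigma=0$, the variance $v_t^{(0)}=\eta+(v_0^{(0)}-\eta)e^{-\kappa t}$ is a deterministic function of $t$, so the asset $S_t^{(0)}$ in \eqref{eq:HestonSDE0} is a geometric Brownian motion with time-dependent volatility and its option value $U^{(0)}$ depends only on $(S,t)$. Either by It\^{o}'s lemma together with the standard no-arbitrage hedging argument, or simply by specializing \eqref{eq:HestonPDE} with $\lambda=0$ and discarding the $v$-derivatives (which vanish since $U^{(0)}$ is $v$-independent), one obtains
\[
\frac{\partial U^{(0)}}{\partial t}+\frac{1}{2}v_t^{(0)}S^2\frac{\partial^2U^{(0)}}{\partial S^2}+rS\frac{\partial U^{(0)}}{\partial S}-rU^{(0)}=0,
\]
with terminal payoff $U^{(0)}(S,T)=(S-K)^+$.

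Next I would set $W(S,t):=U_0(S,v_t^{(0)},t)$ and check that $W$ satisfies precisely this problem. The key observation is that the deterministic variance obeys the ODE $\frac{d}{dt}v_t^{(0)}=\kappa(\eta-v_t^{(0)})$, i.e.\ the curve $v=v_t^{(0)}$ is exactly the characteristic of the convection term in the Heston operator. The chain rule then gives
\[
\frac{\partial W}{\partial t}=\frac{\partial U_0}{\partial t}+\kappa(\eta-v_t^{(0)})\frac{\partial U_0}{\partial v},
\]
while $\partial_SW$ and $\partial_S^2W$ coincide with the corresponding derivatives of $U_0$ evaluated at $v=v_t^{(0)}$. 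Substituting these into the generalized Black--Scholes operator above reassembles term by term the $\sigma=0$ version of the Heston PDE \eqref{eq:HestonPDE} evaluated along $v=v_t^{(0)}$; since $U_0$ (the inverse transform of $V_0$ from Theorem \ref{thm1}) solves that equation by construction, the whole expression vanishes and $W$ indeed satisfies the one-factor PDE.

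It then remains to match the terminal data and to invoke uniqueness. As $t\to T$ the quantity $z=\eta(T-t)+(v_t^{(0)}-\eta)\frac{1-e^{-\kappa(T-t)}}{\kappa}$ tends to $0$, so $d^{\pm}\to+\infty$ when $S>K$ and $d^{\pm}\to-\infty$ when $S<K$; hence $U_0$ collapses to $(S-K)^+$, matching the payoff of $U^{(0)}$. Because the terminal value problem for the generalized Black--Scholes equation has a unique solution in the appropriate class of functions with controlled growth, I conclude $U^{(0)}(S,t)=W(S,t)=U_0(S,v_t^{(0)},t)$.

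I expect the only delicate point to be the bookkeeping which makes the chain-rule term $\kappa(\eta-v_t^{(0)})\partial_vU_0$ cancel exactly against the convection term of the Heston operator; the rest follows from Theorem \ref{thm1} and standard parabolic uniqueness. As a cross-check, a purely probabilistic argument is also available: writing $U^{(0)}(S,t)=e^{-r(T-t)}E[(S_T^{(0)}-K)^+\mid S_t^{(0)}=S]$, the log-return $\ln(S_T^{(0)}/S)$ is Gaussian with variance $\int_t^T v_s^{(0)}\,ds$, a direct computation identifies this integral with the $z$ appearing in $U_0$, and the classical lognormal expectation then reproduces the Black--Scholes expression $U_0$.
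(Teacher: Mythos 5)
Your proof is correct, but it resolves the theorem by a genuinely different mechanism than the paper. Both arguments begin identically, deriving the one-factor terminal value problem \eqref{eq:HestonPDE0}--\eqref{eq:HestonPDEC0} with time-dependent variance coefficient $\eta+(v_0^{(0)}-\eta)e^{-\kappa t}$ and payoff $(S-K)^+$. From there the paper simply \emph{solves} this problem explicitly: it repeats the log-variable/heat-kernel computation already used for $V_0$ in Theorem \ref{thm1}, with $\gamma^{(0)}=\frac{1}{2}[\eta(T-t)+(v_0^{(0)}-\eta)\frac{e^{-\kappa t}-e^{-\kappa T}}{\kappa}]$ playing the role of the diffusion time, and then matches the resulting closed-form expression against \eqref{eq:HestonAsym0U}. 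You instead \emph{verify} that the candidate $W(S,t)=U_0(S,v_t^{(0)},t)$ solves the same terminal value problem: the chain-rule term $\kappa(\eta-v_t^{(0)})\partial_v U_0$ produced by differentiating along the deterministic variance path reproduces exactly the convection term of the $\sigma=0$ Heston operator, which $U_0$ annihilates by Theorem \ref{thm1}, and you conclude by uniqueness. Your cancellation is right (indeed $\frac{d}{dt}v_t^{(0)}=\kappa(\eta-v_t^{(0)})$, so $v=v_t^{(0)}$ is a characteristic of the first-order part), the terminal data match since $z\to 0$ as $t\to T$ forces $U_0\to(S-K)^+$, and your probabilistic cross-check is also sound, because $\int_t^T v_s^{(0)}\,ds$ coincides with the $z$ of \eqref{eq:HestonAsym0U} evaluated at $v=v_t^{(0)}$, which is precisely $2\gamma^{(0)}$ in the paper's computation. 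What each approach buys: yours is more structural, explaining \emph{why} the identity holds (the zero-$\sigma$ variance path is a characteristic curve of the Heston convection term) without recomputing any Gaussian integrals, and it would survive payoffs for which no closed formula exists; the price is that you must invoke uniqueness for a degenerate parabolic terminal-value problem on an unbounded domain in a linear-growth class, a result the paper never states and which, strictly speaking, you should cite or prove. The paper's route is heavier on computation but self-contained: the solution is exhibited explicitly, so no uniqueness class needs to be delimited beyond what is already implicit in calling the heat-kernel convolution ``the'' solution.
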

\begin{proof}
Based on \eqref{eq:HestonSDE0} and applying It\^{o}'s Lemma, we arrive at
\begin{equation}
\frac{1}{2}S^2\frac{\partial^2U^{(0)}}{\partial S^2}\left[\eta+(v_0-\eta)e^{-\kappa t}\right]+rS\frac{\partial U^{(0)}}{\partial S}-rU^{(0)}+\frac{\partial U^{(0)}}{\partial t}=0,\quad
0<S<+\infty,\quad 0\leq t<T,
\label{eq:HestonPDE0}
\end{equation}
and the initial and boundary conditions are
\begin{equation}
\begin{aligned}
U^{(0)}(S,T)=\left(S-K\right)^+,\quad 0\leq S<+\infty,\\
U^{(0)}(0,t) =0,\quad 0\leq t<T,\\
\frac{\partial U^{(0)}}{\partial S}(S,v,t) \to1 \quad as \quad S\to +\infty, \quad 0\leq t<T.\\
\end{aligned}
\label{eq:HestonPDEC0}
\end{equation}
Let
\[\tau=T-t,\quad \tilde{S}=\frac{Se^{r\tau}}{K},\quad V^{(0)}(\tilde{S},\tau)=\frac{U^{(0)}(S,t)e^{r\tau}}{K},\]
the problem \eqref{eq:HestonPDE0}-\eqref{eq:HestonPDEC0} is equivalent to
\begin{eqnarray}
\label{eq:HestonPDE0simp}
\frac{1}{2}\tilde{S}^2\frac{\partial^2V^{(0)}}{\partial \tilde{S}^2}\left[\eta+(v_0-\eta)e^{-\kappa(T-\tau)}\right]=\frac{\partial V^{(0)}}{\partial \tau},\quad 0<\tilde{S}<+\infty,\quad 0<\tau\leq T,\\
V^{(0)}(\tilde{S},0)=(\tilde{S}-1)^+,\quad 0\leq \tilde{S}<+\infty,\\
V^{(0)}(0,\tau) =0,\quad 0<\tau\leq T,\\
\frac{\partial V^{(0)}}{\partial \tilde{S}}(\tilde{S},\tau) \to1 \quad as \quad \tilde{S}\to +\infty, \quad 0<\tau\leq T.
\end{eqnarray}

Let
\[x=\ln\tilde{S},\quad \gamma^{(0)}=\frac{1}{2}\left[\eta\tau+(v_0^{(0)}-\eta)\frac{e^{-\kappa T}(e^{\kappa\tau}-1)}{\kappa}\right],\]
\[W^{(0)}(x,\gamma^{(a)})=V^{(0)}(\tilde{S},\tau)\cdot e^{-\frac{1}{2}x+\frac{1}{4}\gamma^{(0)}},\]
then we have
\begin{equation}
\begin{aligned}
\frac{\partial^2W^{(0)}}{\partial x^2}=\frac{\partial W^{(0)}}{\partial \gamma^{(0)}},\quad
-\infty<x<+\infty,\quad 0<\gamma^{(0)}\leq \frac{1}{2}\left[\eta T+(v_0-\eta)\frac{1-e^{-\kappa T}}{\kappa}\right],\\
W^{(0)}(x,0)=\left(e^{\frac{x}{2}}-e^{-\frac{x}{2}}\right)^+,\quad -\infty<x<+\infty.
\end{aligned}
\end{equation}
Thus, similarly to \eqref{eq:HestonAsym0comp}, we have
\begin{equation}
W^{(0)}(x, \gamma^{(0)})=\frac{1}{\sqrt{4\pi\gamma^{(0)}}}\int_{-\infty}^{+\infty}e^{-\frac{(x-y)^2}{4\gamma^{(0)}}}(e^{\frac{y}{2}}-e^{-\frac{y}{2}})^+dy\nonumber\\
=e^{\frac{1}{4}\gamma^{(0)}}\left[e^{\frac{x}{2}}N\left(\frac{x+\gamma^{(0)}}{\sqrt{2\gamma^{(0)}}}\right)-e^{-\frac{x}{2}}N\left(\frac{x-
\gamma^{(0)}}{\sqrt{2\gamma^{(0)}}}\right)\right],
\end{equation}
and
\begin{equation}
V^{(0)}(\tilde{S}, \tau)=\tilde{S}\cdot N\left(\frac{\ln{\tilde{S}}+\gamma^{(0)}}{\sqrt{2\gamma^{(0)}}}\right)-N\left(\frac{\ln{\tilde{S}}-
\gamma^{(0)}}{\sqrt{2\gamma^{(0)}}}\right).
\end{equation}
Then
\begin{equation}
U^{(0)}(S, t)=S\cdot N\left(\frac{\ln{\frac{S}{K}}+\gamma^{(0)}}{\sqrt{2\gamma^{(0)}}}\right)-Ke^{-r(T-t)}\cdot N\left(\frac{\ln{\frac{S}{K}}-
\gamma^{(0)}}{\sqrt{2\gamma^{(0)}}}\right),
\end{equation}
where
\[\gamma^{(0)}=\frac{1}{2}\left[\eta(T-t)+(v_0^{(0)}-\eta)\frac{e^{-\kappa t}-e^{-\kappa T}}{\kappa}\right].\]
Compared with \eqref{eq:HestonAsym0U}, it is obvious that
\[U^{(0)}(S,t)=U_0(S,\eta+(v_0^{(0)}-\eta)e^{-\kappa t},t)\]
\end{proof}
We study the asymptotic behaviors of the derivatives of $V(\tilde{S},v,\tau)$ and have the following theorem:
\begin{theorem}
\label{thmAsym3}
The following statements of the derivatives of $V$ hold true.
\begin{enumerate}
\item $ v^m\frac{\partial V}{\partial v}(\tilde{S}, v, \tau)\to 0\quad as\quad \sigma\to0^+,\quad v\to +\infty,\quad \forall m\in \mathbb{N},\quad 0\leq \tilde{S}<+\infty, \quad 0\leq\tau\leq T$;\vspace{1mm}
\item $\tilde{S}^m\frac{\partial^2 V}{\partial \tilde{S}^2}(\tilde{S}, v, \tau)\to 0 \quad as \quad\sigma\to0^+,\quad v\to +\infty,\quad \forall m\in \mathbb{N},\quad 0\leq \tilde{S}<+\infty, \quad  \tilde{S} \neq 1, \quad 0\leq\tau\leq T$;\vspace{1mm}
\item $ v^m\frac{\partial V}{\partial v}(\tilde{S}, v, \tau)\to 0\quad as\quad \sigma\to0^+,\quad \tilde{S}\to +\infty,\quad \forall m\in \mathbb{N},\quad 0\leq v<+\infty, \quad 0\leq\tau\leq T$;\vspace{1mm}
\item $\tilde{S}^m\frac{\partial^2 V}{\partial \tilde{S}^2}(\tilde{S}, v, \tau)\to 0 \quad as \quad\sigma\to0^+,\quad \tilde{S}\to +\infty,\quad \forall m\in \mathbb{N},\quad 0\leq v<+\infty, \quad 0\leq\tau\leq T$;\vspace{1mm}
\item $\frac{\partial V}{\partial \tau}(\tilde{S}, v, \tau)\to 0\quad as\quad \sigma\to0^+,\quad \tau\to 0,\quad \forall 0\leq\tilde{S}<+\infty,\quad  \tilde{S} \neq 1,\quad 0\leq v<+\infty$.
\end{enumerate}
\end{theorem}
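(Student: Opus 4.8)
The plan is to reduce everything to the leading-order term $V_0$ supplied by Theorem~\ref{thm1} and then exploit the Gaussian decay of $\phi$. By Theorem~\ref{thm1} we have $V=V_0+\sigma V_1+\sigma^2 V_2+O(\sigma^3)$, and differentiating this expansion term by term, each derivative appearing in (1)--(5) equals the corresponding derivative of $V_0$ plus an $O(\sigma)$ remainder built from derivatives of $V_1,V_2,\dots$. Since $\sigma\to0^+$ in every statement, the leading behavior is governed by $V_0$; I would therefore first establish each secondary limit for the explicit function $V_0$, and then check that the correction terms, which carry the \emph{same} Gaussian factor $\phi(d^-)$, decay at least as fast so that the full expansion inherits the limit.

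First I would compute the relevant derivatives of $V_0$ in closed form. Writing $d^{\pm}=(\ln\tilde S\pm\tfrac12 z)/\sqrt z$ and using $(d^+)^2-(d^-)^2=2\ln\tilde S$, one obtains the identity $\tilde S\,\phi(d^+)=\phi(d^-)$, which collapses the derivatives to
\[
\frac{\partial V_0}{\partial z}=\tfrac12 z^{-1/2}\phi(d^-),\qquad
\frac{\partial V_0}{\partial\tilde S}=N(d^+),\qquad
\frac{\partial^2 V_0}{\partial\tilde S^2}=\frac{1}{\tilde S\sqrt z}\,\phi(d^+).
\]
Since $z=\eta\tau+(v-\eta)\tfrac{1-e^{-\kappa\tau}}{\kappa}$, the chain rule with $\partial_v z=\tfrac{1-e^{-\kappa\tau}}{\kappa}$ and $\partial_\tau z=\eta+(v-\eta)e^{-\kappa\tau}$ then yields $\partial_v V_0$ and $\partial_\tau V_0$. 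In every case the result is an algebraic factor in $z,\tilde S$ times a polynomial in $d^{\pm}$ times $\phi(d^{\pm})$.

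The decay mechanism is then uniform across the five claims. For (1) and (2) I send $v\to+\infty$, so $z\to+\infty$, $d^{\pm}\sim\pm\tfrac12\sqrt z$, and $\phi(d^{\pm})\sim e^{-z/8}$ decays exponentially in $z\sim cv$; this dominates the growing weight $v^m$ and the algebraic factors $z^{-1/2}$, so the products vanish. For (3) and (4) I send $\tilde S\to+\infty$ with $z$ fixed, so $d^{\pm}\to+\infty$ and $\phi(d^{\pm})\sim e^{-(\ln\tilde S)^2/(2z)}$, which is super-exponential in $\ln\tilde S$ and therefore beats $\tilde S^m=e^{m\ln\tilde S}$. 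For (5) I send $\tau\to0$, so $z\to0$; here $\partial_\tau z\to v$ stays bounded and the only threat is the blow-up $z^{-1/2}$, but for $\tilde S\neq1$ we have $\ln\tilde S\neq0$, forcing $d^-\to\pm\infty$ and $\phi(d^-)\lesssim e^{-(\ln\tilde S)^2/(2z)}$, which overwhelms any negative power of $z$. This is exactly where the hypothesis $\tilde S\neq1$ enters (in both (2) and (5)): it guards the degenerate corner $z\to0$, since at $\tilde S=1$ one has $d^-\to0$, $\phi(d^-)\to(2\pi)^{-1/2}$, and the $z^{-1/2}$ singularity survives, matching the kink of the initial payoff $(\tilde S-1)^+$. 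The same estimate $z^{-k}|d^-|^{\,j}\phi(d^-)\to0$ also handles the correction terms, whose extra factors are precisely the powers $z^{-1},z^{-3/2},z^{-2},z^{-5/2}$ and polynomials in $d^-$ appearing in \eqref{eq:HestonAsym1}--\eqref{eq:HestonAsym2}.

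The main obstacle I anticipate is rigor in the joint limit rather than any single computation: one must justify differentiating the asymptotic expansion term by term and control the $O(\sigma^3)$ remainder's derivatives uniformly enough that letting $\sigma\to0^+$ simultaneously with the secondary limit is legitimate. I would resolve this by observing that each term $V_0,V_1,V_2$ and its derivatives separately exhibit the decay above, so the truncated part of the expansion already tends to $0$ in the joint limit; the genuinely delicate case is $\tau\to0$, where the negative powers of $z$ carried by $V_1,V_2$ must be absorbed by the exponential smallness of $\phi(d^-)$, valid precisely because $\ln\tilde S\neq0$.
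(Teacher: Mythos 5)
Your overall route is the same as the paper's: both arguments reduce the claim to the leading term $V_0$ of the expansion in Theorem \ref{thm1}, write its derivatives in closed form (the paper records exactly your identities, $\frac{\partial^2 V_0}{\partial\tilde S^2}=\frac{\phi(d^+)}{\tilde S\sqrt z}=\frac{1}{\sqrt{2\pi z}}\tilde S^{-3/2}e^{-z/8}e^{-(\ln\tilde S)^2/(2z)}$ and $\frac{\partial V_0}{\partial v}=\phi(d^-)\frac{1}{2\sqrt z}\frac{1-e^{-\kappa\tau}}{\kappa}$, i.e.\ your relation $\tilde S\,\phi(d^+)=\phi(d^-)$), and then invoke precisely the three decay mechanisms you describe: $e^{-z/8}$ beats $v^m$ as $v\to+\infty$, $e^{-(\ln\tilde S)^2/(2z)}$ beats $\tilde S^m$ as $\tilde S\to+\infty$, and for $\tau\to0$ the condition $\tilde S\neq1$ lets the Gaussian factor absorb the $z^{-1/2}$ blow-up. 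Your discussion of the correction terms $V_1,V_2$ (same Gaussian factor times powers $z^{-1},\dots,z^{-5/2}$ and polynomials in $d^-$) is actually more explicit than the paper, which simply asserts that the behavior of $V_0$ "easily extends" to $V$ as $\sigma\to0^+$.

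There is, however, one concrete gap: items (1)--(4) are stated for all $0\le\tau\le T$, \emph{including} $\tau=0$, and there your mechanism fails. At $\tau=0$ one has $z=\eta\cdot 0+(v-\eta)\frac{1-e^{0}}{\kappa}=0$ identically in $v$, so sending $v\to+\infty$ does \emph{not} send $z\to+\infty$ (your opening claim for (1)--(2)), and none of the closed-form Gaussian expressions, which presuppose $z>0$, are even defined. The paper disposes of this case first, before any computation: at $\tau=0$ the solution equals the payoff $(\tilde S-1)^+$, hence $\frac{\partial V}{\partial v}(\tilde S,v,0)=0$ for all $\tilde S$ and $\frac{\partial^2 V}{\partial\tilde S^2}(\tilde S,v,0)=0$ for $\tilde S\neq1$, so all four limits are trivial there; only then does it restrict to $0<\tau\le T$, where the bound $z\ge\eta\,\frac{e^{-\kappa\tau}-1+\kappa\tau}{\kappa}>0$ makes the Gaussian estimates legitimate. (For the same reason the paper also checks $\tilde S=0$ separately in item (5), using $V_0(0,v,\tau)\equiv0$, since $\ln\tilde S$ is not defined there.) These are two-line fixes, and your remark that $\tilde S\neq1$ "matches the kink of the payoff" shows you see the underlying issue, but as written your proof does not cover the full parameter range of the statement; adding the $\tau=0$ and $\tilde S=0$ base cases brings it in line with the paper's argument.
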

\begin{proof}
First, we give the proof of the first four limits.

Since
\[V(\tilde{S},v,0)=(\tilde{S}-1)^+,\quad \frac{\partial V}{\partial v}(\tilde{S}, v, 0)=0, \quad \forall 0\leq \tilde{S}<+\infty,\quad 0\leq v<+\infty,\]
\[\frac{\partial^2 V}{\partial \tilde{S}^2}(\tilde{S}, v, 0)=0,\quad \forall 0\leq \tilde{S}<+\infty,\quad  \tilde{S} \neq 1,\quad 0\leq v<+\infty,
\]
the limits can be verified immediately.

Next we can only focus on $0<\tau\leq T$ and the asymptotic behaviors of the derivatives of $V_0(\tilde{S},v,\tau)$ and then easily extend them to those of $V(\tilde{S},v,\tau)$ as $\sigma\to 0^+$.

With \eqref{eq:HestonAsym0} we have
\begin{equation}
\frac{\partial V_0}{\partial v}(\tilde{S}, v, \tau)=\phi(d^-)\frac{1}{2\sqrt{z}}\frac{1-e^{-\kappa\tau}}{\kappa}=\frac{1}{2\sqrt{2\pi z}}\frac{1-e^{-\kappa\tau}}{\kappa}\sqrt{\tilde{S}}e^{-\frac{z}{8}}e^{-\frac{\left(\ln{\tilde{S}}\right)^2}{2z}},\\
\end{equation}
\begin{equation}
\frac{\partial^2 V_0}{\partial \tilde{S}^2}(\tilde{S}, v, \tau)=\phi(d^+)\frac{1}{\tilde{S}\sqrt{z}}=\frac{1}{\sqrt{2\pi z}}\frac{1}{\tilde{S}\sqrt{\tilde{S}}}e^{-\frac{z}{8}}e^{-\frac{\left(\ln{\tilde{S}}\right)^2}{2z}}.
\end{equation}
Note that for $0<\tau\leq T,\quad 0\leq v<+\infty$,
\[z=\eta\tau+(v-\eta)\frac{1-e^{-\kappa\tau}}{\kappa}\geq \eta\frac{e^{-\kappa\tau}-1+\kappa\tau}{\kappa}>0.\]
Then $\forall m\in\mathbb{N},\quad 0<\tau\leq T,\quad 0\leq \tilde{S}<+\infty$,
\begin{equation}
\lim_{v\to +\infty}v^m\frac{\partial V_0}{\partial v}(\tilde{S}, v, \tau)=\frac{1}{2\sqrt{2\pi}}\frac{1-e^{-\kappa\tau}}{\kappa}\sqrt{\tilde{S}}\lim_{v\to +\infty}v^m\frac{1}{\sqrt{z}}e^{-\frac{z}{8}}e^{-\frac{\left(\ln{\tilde{S}}\right)^2}{2z}}=0,
\end{equation}
\begin{equation}
\lim_{v\to +\infty}\tilde{S}^m\frac{\partial^2 V_0}{\partial \tilde{S}^2}(\tilde{S}, v, \tau)=\frac{1}{\sqrt{2\pi}}\tilde{S}^{m-\frac{3}{2}}\lim_{v\to +\infty}\frac{1}{\sqrt{z}}e^{-\frac{z}{8}}e^{-\frac{\left(\ln{\tilde{S}}\right)^2}{2z}}=0,
\end{equation}
and $\forall m\in\mathbb{N},\quad 0<\tau\leq T,\quad 0\leq v<+\infty$,
\begin{equation}
\lim_{\tilde{S}\to +\infty}v^m\frac{\partial V_0}{\partial v}(\tilde{S}, v, \tau)=\frac{1}{2\sqrt{2\pi z}}\frac{1-e^{-\kappa\tau}}{\kappa}v^m e^{-\frac{z}{8}}\lim_{\tilde{S}\to +\infty}\sqrt{\tilde{S}}e^{-\frac{\left(\ln{\tilde{S}}\right)^2}{2z}}=0,
\end{equation}
\begin{equation}
\lim_{\tilde{S}\to +\infty}\tilde{S}^m\frac{\partial^2 V_0}{\partial \tilde{S}^2}(\tilde{S}, v, \tau)=\frac{1}{\sqrt{2\pi z}}e^{-\frac{z}{8}}\lim_{\tilde{S}\to +\infty}\tilde{S}^{m-\frac{3}{2}}e^{-\frac{\left(\ln{\tilde{S}}\right)^2}{2z}}=0.
\end{equation}

For the last limit, similarly, we focus on the asymptotic behavior of $\frac{\partial V_0}{\partial \tau}(\tilde{S}, v, \tau)$ and extend it to $\frac{\partial V}{\partial \tau}(\tilde{S}, v, \tau)$ as $\sigma\to0^+$.
With
\eqref{eq:HestonAsym0} we have
\begin{equation}
\frac{\partial V_0}{\partial \tau}(\tilde{S}, v, \tau)=\phi(d^-)\frac{1}{2\sqrt{z}}\left(\eta+(v-\eta)e^{-\kappa\tau}\right)=\frac{\sqrt{\tilde{S}}}{2\sqrt{2\pi}}\frac{1}{\sqrt{z}}e^{-\frac{\left(\ln{\tilde{S}}\right)^2}{2z}-\frac{z}{8}}\left(\eta+(v-\eta)e^{-\kappa\tau}\right).
\end{equation}
When $\tilde{S}=0$,
\[V_0(0, v, \tau)=0, \quad \frac{\partial V_0}{\partial \tau}(0, v, \tau)=0,\quad \forall 0\leq v<+\infty,  \quad 0\leq\tau\leq T.\]
$\forall 0<\tilde{S}<+\infty, \quad \tilde{S} \neq 1, \quad 0\leq v<+\infty$,
\begin{equation}
\lim_{\tau\to 0}\frac{\partial V_0}{\partial \tau}(\tilde{S}, v, \tau)=\frac{\sqrt{\tilde{S}}}{2\sqrt{2\pi}}\lim_{\tau\to 0}\frac{1}{\sqrt{z}}e^{-\frac{\left(\ln{\tilde{S}}\right)^2}{2z}-\frac{z}{8}}\left(\eta+(v-\eta)e^{-\kappa\tau}\right)=0.
\end{equation}
\end{proof}


\section{Artificial boundary conditions for the option pricing problem}\label{sec:abc}
Choose $\tilde{S}_{max}>1$, we introduce an artificial boundary
 and the computational domain as follows:
\[\Gamma_{\tilde{S}}=\{(\tilde{S}, v, \tau)\mid \tilde{S}=\tilde{S}_{max},\quad 0\leq v<+\infty, \quad 0\leq \tau\leq T\},\]
\[\Omega_{\tilde{S}}=\{(\tilde{S}, v, \tau)\mid \tilde{S}_{max}<\tilde{S}<+\infty,\quad 0\leq v<+\infty, \quad 0\leq \tau\leq T\}.\]
Then we try to construct an artificial boundary condition on $\Gamma_{\tilde{S}}$ by restricting the problem \eqref{eq:HestonProfirst}-\eqref{eq:HestonProlast} on $\Omega_{\tilde{S}}$.

We consider the problem when $\sigma\to0^+$ first. By Theorem \ref{thmAsym3}, we know that
\[\kappa(\eta-v)\frac{\partial V}{\partial v}(\tilde{S},v,\tau)\to 0\quad as \quad \sigma\to0^+,\quad \tilde{S}\to+\infty.\]
So it is rational to omit all the three derivatives w.r.t. $v$ in the PDE \eqref{eq:HestonProfirst} for simplicity. Then, to reduce the error, we treat those three terms as a source term related to only $v$ and $\tau$ and then to $\tilde{S}$, $v$ and $\tau$.
\subsection{An approximate artificial boundary condition (ApABC)}
 First, we drop the three derivatives w.r.t. $v$ and get to the approximate problem as follows:
\begin{equation}
\begin{aligned}
\frac{1}{2}v\tilde{S}^2\frac{\partial^2V^{(a)}}{\partial \tilde{S}^2}=\frac{\partial V^{(a)}}{\partial\tau},\quad
\tilde{S}_{max}<\tilde{S}<+\infty,\quad 0<v<+\infty,\quad 0<\tau\leq T,\\
V^{(a)}(\tilde{S},v,0)=\tilde{S}-1,\quad \tilde{S}_{max}\leq\tilde{S}<+\infty,\quad 0\leq v<+\infty.
\label{eq:PDEforABC}
\end{aligned}
\end{equation}
We introduce some changes of variables and derive an artificial boundary condition based on the method by Han and Wu \cite{HanWu}.
Let
\[W^{(a)}(\tilde{S},v,\tau)=V^{(a)}(\tilde{S},v,\tau)-(\tilde{S}-1),\]
then we have
\begin{equation}
\begin{aligned}
\frac{1}{2}v\tilde{S}^2\frac{\partial^2W^{(a)}}{\partial \tilde{S}^2}=\frac{\partial W^{(a)}}{\partial\tau},\quad
\tilde{S}_{max}<\tilde{S}<+\infty,\quad 0<v<+\infty,\quad 0<\tau\leq T,\\
W^{(a)}(\tilde{S},v,0)=0,\quad \tilde{S}_{max}\leq\tilde{S}<+\infty,\quad 0\leq v<+\infty.\\
\end{aligned}
\label{eq:PDEforABC1}
\end{equation}
We assume
\begin{equation}
W^{(a)}(\tilde{S}_{max},v,\tau)=\psi(v,\tau),\quad 0\leq v<+\infty, \quad 0\leq\tau\leq T,
\label{eq:PDEforABCpart2}
\end{equation}
with $\psi(v,0)=0$ to make the problem \eqref{eq:PDEforABC1} complete.
Let
\begin{equation}
\begin{aligned}
x=\ln{\tilde{S}},\quad
x_{max}=\ln{\tilde{S}_{max}},\quad
\gamma^{(a)}=\frac{1}{2}v\tau,\\
\tilde{W}^{(a)}(x,v,\gamma^{(a)})=e^{-\frac{1}{2}x+\frac{1}{4}\gamma^{(a)}}W^{(a)}(\tilde{S},v,\tau),
\end{aligned}
\label{eq:ABCtrans}
\end{equation}
then the problem \eqref{eq:PDEforABC1}-\eqref{eq:PDEforABCpart2} is equivalent to
\begin{equation}
\begin{aligned}
\frac{\partial^2\tilde{W}^{(a)}}{\partial x^2}=\frac{\partial \tilde{W}^{(a)}}{\partial\gamma^{(a)}},\quad
x_{max}<x<+\infty,\quad 0<v<+\infty,\quad 0<\gamma^{(a)}\leq \frac{1}{2}vT,\\
\tilde{W}^{(a)}(x,v,0)=0,\quad x_{max}\leq x<+\infty,\quad 0\leq v<+\infty,\\
\tilde{W}^{(a)}(x_{max},v,\gamma^{(a)})=e^{-\frac{1}{2}x_{max}+\frac{1}{4}\gamma^{(a)}}\psi(v,\frac{2}{v}\gamma^{(a)}),\quad
0\leq v<+\infty, \quad 0\leq\gamma^{(a)}\leq \frac{1}{2}vT.\\
\end{aligned}
\end{equation}
The solution can be given by
\begin{equation}
\tilde{W}^{(a)}(x,v,\gamma^{(a)})=\int_{0}^{\gamma^{(a)}}\frac{x-x_{max}}{\sqrt{4\pi(\gamma^{(a)}-s)^3}}e^{-\frac{(x-x_{max})^2}{4(\gamma^{(a)}-s)}}e^{-\frac{1}{2}x_{max}+\frac{1}{4}s}\psi(v,\frac{2}{v}s)ds.
\end{equation}
Let
\[\mu=\frac{x-x_{max}}{2\sqrt{\gamma^{(a)}-s}},\]
then
\begin{equation}
\tilde{W}^{(a)}(x,v,\gamma^{(a)})=\frac{2}{\sqrt{\pi}}\int_{\frac{x-x_{max}}{2\sqrt{\gamma^{(a)}}}}^{+\infty}e^{-\mu^2-\frac{1}{4}\left(\frac{x-x_{max}}{2\mu}\right)^2+\frac{1}{4}\gamma^{(a)}-\frac{1}{2}x_{max}}
\psi\left(v,\frac{2}{v}\left[\gamma^{(a)}-\left(\frac{x-x_{max}}{2\mu}\right)^2\right]\right)d\mu,
\end{equation}
and
\begin{equation}
\begin{aligned}
&\frac{\partial\tilde{W}^{(a)}}{\partial x}(x,v,\gamma^{(a)})\\
=&-\frac{2}{\sqrt{\pi}}\int_{\frac{x-x_{max}}{2\sqrt{\gamma^{(a)}}}}^{+\infty}e^{-\mu^2-\frac{1}{4}\left(\frac{x-x_{max}}{2\mu}\right)^2+\frac{1}{4}\gamma^{(a)}-\frac{1}{2}x_{max}}\left(\frac{x-x_{max}}{8\mu^2}\right)\psi\left(v,\frac{2}{v}\left[\gamma^{(a)}-\left(\frac{x-x_{max}}{2\mu}\right)^2\right]\right)d\mu\\
&+\frac{2}{\sqrt{\pi}}\int_{\frac{x-x_{max}}{2\sqrt{\gamma^{(a)}}}}^{+\infty}e^{-\mu^2-\frac{1}{4}\left(\frac{x-x_{max}}{2\mu}\right)^2+\frac{1}{4}\gamma^{(a)}-\frac{1}{2}x_{max}}\frac{\partial \psi}{\partial \tau}\left(v,\frac{2}{v}\left[\gamma^{(a)}-\left(\frac{x-x_{max}}{2\mu}\right)^2\right]\right)\left(-\frac{1}{v}\frac{x-x_{max}}{\mu^2}\right)d\mu,\\
\end{aligned}
\end{equation}
due to $\psi(v,0)=0$.
Let
\[s=\gamma^{(a)}-\left(\frac{x-x_{max}}{2\mu}\right)^2,\]
then
\begin{equation}
\frac{\partial\tilde{W}^{(a)}}{\partial x}(x,v,\gamma^{(a)})=-\frac{1}{\sqrt{\pi}}\int_{0}^{\gamma^{(a)}}e^{-\frac{(x-x_{max})2}{4(\gamma^{(a)}-s)}}e^{\frac{1}{4}s-\frac{1}{2}x_{max}}\left[\frac{1}{4}\psi\left(v,\frac{2}{v}s\right)\right.\left.+\frac{2}{v}\frac{\partial \psi}{\partial \tau}\left(v,\frac{2}{v}s\right)\right]\frac{1}{\sqrt{\gamma^{(a)}-s}}ds.\\
\end{equation}
Thus,
\begin{equation}
\frac{\partial\tilde{W}^{(a)}}{\partial x}(x_{max},v,\gamma^{(a)})=-\frac{1}{\sqrt{\pi}}\int_{0}^{\gamma^{(a)}}e^{\frac{1}{4}s-\frac{1}{2}x_{max}}\left[\frac{1}{4}\psi\left(v,\frac{2}{v}s\right)+\frac{2}{v}\frac{\partial \psi}{\partial \tau}\left(v,\frac{2}{v}s\right)\right]\frac{1}{\sqrt{\gamma^{(a)}-s}}ds.
\end{equation}
Do the inverse transformation of \eqref{eq:ABCtrans} and let
\[s=\frac{1}{2}v\tau^\prime,\]
then we have
\begin{equation}
\begin{aligned}
&\frac{\partial W^{(a)}}{\partial \tilde{S}}(\tilde{S}_{max},v,\tau)\\
=&-\frac{1}{\tilde{S}_{max}}\sqrt{\frac{v}{2\pi}}\int_{0}^{\tau}e^{-\frac{1}{8}v(\tau-\tau^\prime)}\left[\frac{1}{4}\psi(v,\tau^\prime)+\frac{2}{v}\frac{\partial \psi}{\partial \tau}(v,\tau^\prime)\right]\frac{1}{\sqrt{\tau-\tau^\prime}}d\tau^\prime+\frac{1}{2\tilde{S}_{max}}W^{(a)}(\tilde{S}_{max},v,\tau)\\
=&-\frac{1}{\tilde{S}_{max}}\sqrt{\frac{v}{2\pi}}\int_{0}^{\tau}e^{-\frac{1}{8}v(\tau-\tau^\prime)}\left[\frac{1}{4}W^{(a)}(\tilde{S}_{max},v,\tau^\prime)+\frac{2}{v}\frac{\partial W^{(a)}}{\partial \tau}(\tilde{S}_{max},v,\tau^\prime)\right]\frac{1}{\sqrt{\tau-\tau^\prime}}d\tau^\prime+\frac{1}{2\tilde{S}_{max}}W^{(a)}(\tilde{S}_{max},v,\tau).\\
\end{aligned}
\label{eq:ABClast}
\end{equation}
Since
\[V^{(a)}(\tilde{S},v,\tau)=W^{(a)}(\tilde{S},v,\tau)+\tilde{S}-1,\]
\begin{equation}
\begin{aligned}
\frac{\partial V^{(a)}}{\partial \tilde{S}}(\tilde{S}_{max},v,\tau)=&-\frac{1}{\tilde{S}_{max}}\sqrt{\frac{v}{2\pi}}\int_{0}^{\tau}e^{-\frac{1}{8}v(\tau-\tau^\prime)}\left[\frac{1}{4}V^{(a)}(\tilde{S}_{max},v,\tau^\prime)+\frac{2}{v}\frac{\partial V^{(a)}}{\partial \tau}(\tilde{S}_{max},v,\tau^\prime)\right]\frac{1}{\sqrt{\tau-\tau^\prime}}d\tau^\prime\\
&+\frac{1}{2\tilde{S}_{max}}V^{(a)}(\tilde{S}_{max},v,\tau)+\frac{\tilde{S}_{max}+1}{2\tilde{S}_{max}}+\frac{\tilde{S}_{max}-1}{4\tilde{S}_{max}}\sqrt{\frac{v}{2\pi}}\int_{0}^{\tau}e^{-\frac{1}{8}v(\tau-\tau^\prime)}\frac{1}{\sqrt{\tau-\tau^\prime}}d\tau^\prime.\\
\end{aligned}
\end{equation}
After some calculation, we have
\begin{equation}
\int^{\beta}_{\alpha}e^{-\frac{1}{8}v(\tau-\tau^\prime)}\frac{1}{\sqrt{\tau-\tau^\prime}}d\tau^\prime
=\frac{4\sqrt{2\pi}}{\sqrt{v}}\left[N\left(\frac{\sqrt{v(\tau-\alpha)}}{2}\right)-N\left(\frac{\sqrt{v(\tau-\beta)}}{2}\right)\right], \  \forall \alpha,\  \beta\in\mathbb{R},
\label{eq:calculation1}
\end{equation}
and
\begin{equation}
\int_{0}^{\tau}e^{-\frac{1}{8}v(\tau-\tau^\prime)}\frac{1}{\sqrt{\tau-\tau^\prime}}d\tau^\prime=\frac{4\sqrt{2\pi}}{\sqrt{v}}\left[N\left(\frac{\sqrt{v\tau}}{2}\right)-\frac{1}{2}\right],
\label{eq:calculation2}
\end{equation}
due to $N(0)=\frac{1}{2}$.
Then
\begin{equation}
\begin{aligned}
\frac{\partial V^{(a)}}{\partial \tilde{S}}(\tilde{S}_{max},v,\tau)=&-\frac{1}{\tilde{S}_{max}}\sqrt{\frac{v}{2\pi}}\int_{0}^{\tau}e^{-\frac{1}{8}v(\tau-\tau^\prime)}\left[\frac{1}{4}V^{(a)}(\tilde{S}_{max},v,\tau^\prime)+\frac{2}{v}\frac{\partial V^{(a)}}{\partial \tau}(\tilde{S}_{max},v,\tau^\prime)\right]\frac{1}{\sqrt{\tau-\tau^\prime}}d\tau^\prime\\
&+\frac{1}{2\tilde{S}_{max}}V^{(a)}(\tilde{S}_{max},v,\tau)+\frac{1}{\tilde{S}_{max}}+\frac{\tilde{S}_{max}-1}{\tilde{S}_{max}}N\left(\frac{\sqrt{v\tau}}{2}\right).
\end{aligned}
\label{eq:ABC}
\end{equation}
We call \eqref{eq:ABC} the approximate artificial boundary condition (ApABC) for the problem \eqref{eq:HestonProfirst}-\eqref{eq:HestonProlast}.

\subsection{The first modified approximate artificial boundary condition (MApABC1)}
We assume that the three derivatives w.r.t. $v$ in the PDE \eqref{eq:HestonProfirst} approximately make up a source term  independent of  $\tilde{S}$, which is denoted by $Q_1(v,\tau)$. Then the problem should be
\begin{equation}
\begin{aligned}
\frac{1}{2}v\tilde{S}^2\frac{\partial^2V^{(m1)}}{\partial \tilde{S}^2}+Q_1(v,\tau)=\frac{\partial V^{(m1)}}{\partial\tau},\quad
\tilde{S}_{max}<\tilde{S}<+\infty,\quad 0<v<+\infty,\quad 0<\tau\leq T,\\
V^{(m1)}(\tilde{S},v,0)=\tilde{S}-1,\quad \tilde{S}_{max}\leq\tilde{S}<+\infty,\quad 0\leq v<+\infty,
\label{eq:PDEforABCc1}
\end{aligned}
\end{equation}
where
\begin{equation}
Q_1(v,\tau)\approx \rho\sigma v\tilde{S}\frac{\partial^2V^{(m1)}}{\partial \tilde{S}\partial v}+\frac{1}{2}\sigma^2v\frac{\partial^2V^{(m1)}}{\partial v^2}+\kappa\left(\eta-v\right)\frac{\partial V^{(m1)}}{\partial v},
\label{eq:ABCc1Q}
\end{equation}
by the assumption.
Let
\[\tilde{V}^{(m1)}(\tilde{S},v,\tau)=V^{(m1)}(\tilde{S},v,\tau)-\int_{0}^{\tau}Q_1(v,s)ds,\]
then we can easily know that the ApABC in \eqref{eq:ABC} can be used on $\tilde{V}^{(m1)}$. And for $V^{(m1)}$, the boundary condition can be expressed as follows:
\begin{equation}
\begin{aligned}
\frac{\partial V^{(m1)}}{\partial \tilde{S}}(\tilde{S}_{max},v,\tau)=&-\frac{1}{\tilde{S}_{max}}\sqrt{\frac{v}{2\pi}}\int_{0}^{\tau}e^{-\frac{1}{8}v(\tau-\tau^\prime)}\left[\frac{1}{4}V^{(m1)}(\tilde{S}_{max},v,\tau^\prime)+\frac{2}{v}\frac{\partial V^{(m1)}}{\partial \tau}(\tilde{S}_{max},v,\tau^\prime)\right]\frac{1}{\sqrt{\tau-\tau^\prime}}d\tau^\prime\\
&+\frac{1}{2\tilde{S}_{max}}V^{(m1)}(\tilde{S}_{max},v,\tau)+\frac{1}{\tilde{S}_{max}}+\frac{\tilde{S}_{max}-1}{\tilde{S}_{max}}N\left(\frac{\sqrt{v\tau}}{2}\right)+ I_1(v,\tau),
\end{aligned}
\label{eq:ABCc1pro1}
\end{equation}
where
\begin{equation}
I_1(v,\tau)=\frac{1}{\tilde{S}_{max}}\sqrt{\frac{v}{2\pi}}\int_{0}^{\tau}e^{-\frac{1}{8}v(\tau-\tau^\prime)}\left[\frac{1}{4}\int_{0}^{\tau^\prime}Q_1(v,s)ds+\frac{2}{v}Q_1(v,\tau^\prime)\right]\frac{1}{\sqrt{\tau-\tau^\prime}}d\tau^\prime-\frac{1}{2\tilde{S}_{max}}\int_{0}^{\tau}Q_1(v,\tau^\prime)d\tau^\prime.\nonumber
\end{equation}
After some calculation, we get
\begin{equation}
I_1(v,\tau)=\frac{1}{\tilde{S}_{max}}\int_{0}^{\tau}\left[\frac{\sqrt{2}}{\sqrt{\pi v(\tau-\tau^\prime)}}e^{-\frac{1}{8}v(\tau-\tau^\prime)}+N\left(\frac{\sqrt{v(\tau-\tau^\prime)}}{2}\right)-1\right]Q_1(v,\tau^\prime)d\tau^\prime.
\label{eq:ABCc1l1}
\end{equation}
We call \eqref{eq:ABCc1pro1}-\eqref{eq:ABCc1l1} the first modified approximate artificial boundary condition (MApABC1).

\subsection{The second modified approximate artificial boundary condition (MApABC2)}
We denote the three derivatives w.r.t $v$ in the PDE  \eqref{eq:HestonProfirst} by $Q_2(\tilde{S}, v,\tau)$. The problem should be
\begin{equation}
\begin{aligned}
\frac{1}{2}v\tilde{S}^2\frac{\partial^2V^{(m2)}}{\partial \tilde{S}^2}+Q_2(\tilde{S}, v,\tau)=\frac{\partial V^{(m2)}}{\partial\tau},\quad
\tilde{S}_{max}<\tilde{S}<+\infty,\quad 0<v<+\infty,\quad 0<\tau\leq T,\\
V^{(m2)}(\tilde{S},v,0)=\tilde{S}-1,\quad \tilde{S}_{max}\leq\tilde{S}<+\infty,\quad 0\leq v<+\infty,
\label{eq:PDEforABCc2}
\end{aligned}
\end{equation}
where
\begin{equation}
Q_2(\tilde{S},v,\tau)\approx \rho\sigma v\tilde{S}\frac{\partial^2V^{(m2)}}{\partial \tilde{S}\partial v}+\frac{1}{2}\sigma^2v\frac{\partial^2V^{(m2)}}{\partial v^2}+\kappa\left(\eta-v\right)\frac{\partial V^{(m2)}}{\partial v}.
\label{eq:ABCc2Q2}
\end{equation}
Let
\begin{equation}
\begin{aligned}
\tilde{V}^{(m2)}(\tilde{S},v,\tau)=&V^{(m2)}(\tilde{S},v,\tau)-\int_{0}^{\tau}\!\!\int_{\tilde{S}_{max}}^{+\infty}\frac{1}{\sqrt{2\pi v(\tau-s)}}\left(\exp{\left(-\frac{(\ln\tilde{S}-\ln S^\prime)^2}{2v(\tau-s)}\right)}\right.\\
&\left.-\exp{\left(-\frac{(\ln\tilde{S}-2\ln\tilde{S}_{max}+\ln S^\prime)^2}{2v(\tau-s)}\right)}\right)\exp{\left(\frac{1}{2}(\ln\tilde{S} - 3\ln S^\prime)\right.}
{\left.-\frac{1}{8}v(\tau - s)\right)}Q_2(S^\prime,v,s)dS^\prime ds,\nonumber
\end{aligned}
\end{equation}
Similarly, the ApABC in \eqref{eq:ABC} can also be used on $\tilde{V}^{(m2)}$ and the boundary condition for $V^{(m2)}$ is shown below:
\begin{equation}
\begin{aligned}
\frac{\partial V^{(m2)}}{\partial \tilde{S}}(\tilde{S}_{max},v,\tau)=&-\frac{1}{\tilde{S}_{max}}\sqrt{\frac{v}{2\pi}}\int_{0}^{\tau}e^{-\frac{1}{8}v(\tau-\tau^\prime)}\left[\frac{1}{4}V^{(m2)}(\tilde{S}_{max},v,\tau^\prime)+\frac{2}{v}\frac{\partial V^{(m2)}}{\partial \tau}(\tilde{S}_{max},v,\tau^\prime)\right]\frac{1}{\sqrt{\tau-\tau^\prime}}d\tau^\prime\\
&+\frac{1}{2\tilde{S}_{max}}V^{(m2)}(\tilde{S}_{max},v,\tau)+\frac{1}{\tilde{S}_{max}}+\frac{\tilde{S}_{max}-1}{\tilde{S}_{max}}N\left(\frac{\sqrt{v\tau}}{2}\right)+ I_2(v,\tau),
\end{aligned}
\label{eq:ABCc2pro1}
\end{equation}
where, after some calculation,
\begin{equation}
\begin{aligned}
I_2(v,\tau)=&\frac{1}{\tilde{S}_{max}}\int_{0}^{\tau}\int_{\tilde{S}_{max}}^{+\infty}\frac{\sqrt{2}}{\sqrt{\pi v(\tau-\tau^\prime)}}\frac{\ln S^\prime - \ln\tilde{S}_{max}}{v(\tau-\tau^\prime)}\\
&\exp{\left(-\frac{(\ln S^\prime-\ln\tilde{S}_{max})^2}{2v(\tau-\tau^\prime)}-\frac{1}{2}(3\ln S^\prime-\ln\tilde{S}_{max})-\frac{1}{8}v(\tau-\tau^\prime)\right)}Q_2(S^\prime,v,\tau^\prime)dS^\prime d\tau^\prime.
\end{aligned}
\label{eq:ABCc2l2}
\end{equation}
We call \eqref{eq:ABCc2pro1}-\eqref{eq:ABCc2l2} the second modified approximate artificial boundary condition (MApABC2). Note that if we assume $Q_2$ is independent of $\tilde{S}$
, i.e. $Q_2(\tilde{S},v,\tau)=Q_1(v,\tau)$,
then MApABC2 in \eqref{eq:ABCc2pro1}-\eqref{eq:ABCc2l2} can be easily reduced to MApABC1 in \eqref{eq:ABCc1pro1}-\eqref{eq:ABCc1l1}.


\section{Finite difference schemes}\label{sec:fdm}
\label{section:fds}
In this section, we restrict the problem \eqref{eq:HestonProfirst}-\eqref{eq:HestonProlast} on a bounded domain $[0,T]\times[0,\tilde{S}_{max}]\times[0,v_{max}]$ with different artificial boundary conditions and build finite difference schemes to solve the corresponding problems numerically. We apply the two modified approximate artificial boundary conditions on $\tilde{S}=\tilde{S}_{max}$ proposed above respectively and a homogeneous Neumann boundary condition on $v=v_{max}$ as in \cite{Diamond} instead of the one by Heston which leads to singular jump. Before moving on to the problems with MApABC1 and MApABC2, we take the problem with ApABC as an example for simplicity:
\begin{eqnarray}
\frac{1}{2}v\tilde{S}^2\frac{\partial^2V}{\partial \tilde{S}^2}+\rho\sigma v\tilde{S}\frac{\partial^2V}{\partial \tilde{S}\partial v}+\frac{1}{2}\sigma^2v\frac{\partial^2V}{\partial v^2}+\kappa\left(\eta-v\right)\frac{\partial V}{\partial v}=\frac{\partial V}{\partial \tau},\nonumber\\
0<\tilde{S}<\tilde{S}_{max},\quad 0<v<v_{max},\quad 0<\tau\leq T,
\label{eq:HestonABCProfirst}\\
V(\tilde{S},v,0)=(\tilde{S}-1)^+,\quad 0\leq\tilde{S}\leq\tilde{S}_{max},\quad 0\leq v\leq v_{max},
\label{eq:HestonABCProsecond}\\
V\left(0,v,\tau\right)=0,\quad 0\leq v\leq v_{max},\quad 0<\tau\leq T,\\
\frac{\partial V}{\partial \tilde{S}}(\tilde{S}_{max},v,\tau)=-\frac{1}{\tilde{S}_{max}}\sqrt{\frac{v}{2\pi}}\int_{0}^{\tau}e^{-\frac{1}{8}v(\tau-\tau^\prime)}\left[\frac{1}{4}V(\tilde{S}_{max},v,\tau^\prime)
+\frac{2}{v}\frac{\partial V}{\partial \tau}(\tilde{S}_{max},v,\tau^\prime)\right]\frac{1}{\sqrt{\tau-\tau^\prime}}d\tau^\prime\nonumber\\
+\frac{1}{2\tilde{S}_{max}}V(\tilde{S}_{max},v,\tau)+\frac{1}{\tilde{S}_{max}}
+\frac{\tilde{S}_{max}-1}{\tilde{S}_{max}}N\left(\frac{\sqrt{v\tau}}{2}\right),\quad 0\leq v\leq v_{max}, \quad 0<\tau\leq T,
\label{eq:HestonABCforS}\\
\kappa\eta\frac{\partial V}{\partial v}(\tilde{S},0,\tau)=\frac{\partial V}{\partial \tau}(\tilde{S},0,\tau),\quad 0\leq \tilde{S}\leq\tilde{S}_{max},\quad 0<\tau\leq T,
\label{eq:HestonABCProlast2}\\
\frac{\partial V}{\partial v}(\tilde{S},v_{max},\tau)=0,\quad 0\leq \tilde{S}\leq\tilde{S}_{max},\quad 0<\tau\leq T.
\label{eq:HestonABCProlast}
\end{eqnarray}

We construct a uniform grid with
\[\tau_n=n\Delta\tau,\quad n=0, ..., N,\quad \Delta\tau=\frac{T}{N},\]
\[\tilde{S}_i=i\Delta \tilde{S},\quad i=0,...,I,\quad \Delta \tilde{S}=\frac{\tilde{S}_{max}}{I},\]
\[v_j=j\Delta v,\quad j=0,...,J,\quad \Delta v=\frac{v_{max}}{J},\]
and denote
\[V_{i,j}^n=V(\tilde{S}_i,v_j,\tau_n),\quad i=0,...,I, \quad j=0,...,J,\quad n=0,...,N.\]

Due to the nonsmoothness of the initial condition in \eqref{eq:HestonABCProsecond}, we choose the backward Euler scheme for the first time step and then turn to the Crank-Nicolson scheme for the subsequent time steps for both the PDE \eqref{eq:HestonABCProfirst} and the boundary condition on $v=0$ in \eqref{eq:HestonABCProlast2}. We only present the Crank-Nicolson scheme below for simplicity.

For the PDE \eqref{eq:HestonABCProfirst}, we use a central difference scheme for the second-order derivative term w.r.t. $\tilde{S}$ and the cross-derivative term and a Samarskii scheme for the first- and second-order derivative terms w.r.t. $v$ which is upwind and second-order accurate,
\begin{equation}
\begin{split}
\frac{1}{4}v_j\tilde{S}_i^2\frac{V_{i+1,j}^{n+1}-2V_{i,j}^{n+1}+V_{i-1,j}^{n+1}}{(\Delta\tilde{S})^2}+\frac{1}{2}\rho\sigma v_j\tilde{S}_i\frac{V_{i+1,j+1}^{n+1}-V_{i-1,j+1}^{n+1}+V_{i-1,j-1}^{n+1}-V_{i+1,j-1}^{n+1}}{4\Delta\tilde{S}\Delta v}\\
+\frac{1}{4}\sigma^2v_j\frac{1}{1+R_j}\frac{V_{i,j+1}^{n+1}-2V_{i,j}^{n+1}+V_{i,j-1}^{n+1}}{(\Delta v)^2}
+\frac{1}{2}\kappa\left(\eta-v_j\right)^+\frac{V_{i,j+1}^{n+1}-V_{i,j}^{n+1}}{\Delta v}+\frac{1}{2}\kappa\left(\eta-v_j\right)^-\frac{V_{i,j}^{n+1}-V_{i,j-1}^{n+1}}{\Delta v}\\
+\frac{1}{4}v_j\tilde{S}_i^2\frac{V_{i+1,j}^{n}-2V_{i,j}^{n}+V_{i-1,j}^{n}}{(\Delta\tilde{S})^2}+\frac{1}{2}\rho\sigma v_j\tilde{S}_i\frac{V_{i+1,j+1}^{n}-V_{i-1,j+1}^{n}+V_{i-1,j-1}^{n}-V_{i+1,j-1}^{n}}{4\Delta\tilde{S}\Delta v}\\
+\frac{1}{4}\sigma^2v_j\frac{1}{1+R_j}\frac{V_{i,j+1}^{n}-2V_{i,j}^{n}+V_{i,j-1}^{n}}{(\Delta v)^2}
+\frac{1}{2}\kappa\left(\eta-v_j\right)^+\frac{V_{i,j+1}^{n}-V_{i,j}^n}{\Delta v}+\frac{1}{2}\kappa\left(\eta-v_j\right)^-\frac{V_{i,j}^{n}-V_{i,j-1}^n}{\Delta v}\\
=\frac{V_{i,j}^{n+1}-V_{i,j}^{n}}{\Delta\tau},\\
\end{split}
\end{equation}
where
\[R_j=\frac{\kappa|\eta-v_j|}{\sigma^2v_j}\Delta v,\quad i=1,...,I-1,\quad j=1,...,J-1, \quad n=1,...,N-1.\]

For the boundary condition \eqref{eq:HestonABCProlast2} on $v=0$, we select the upwind scheme similarly:
\begin{equation}
\frac{1}{2}\kappa\eta\frac{V_{i,1}^{n+1}-V_{i,0}^{n+1}}{\Delta v}+\frac{1}{2}\kappa\eta\frac{V_{i,1}^{n}-V_{i,0}^n}{\Delta v}=\frac{V_{i,0}^{n+1}-V_{i,0}^{n}}{\Delta\tau},\quad i=1,...,I,\quad n=1,...,N.
\end{equation}

A constant extrapolation is used for the last boundary condition \eqref{eq:HestonABCProlast}:
\begin{equation}
V_{i,J}^{n}=V_{i,J-1}^{n},\quad i=1,...,I, \quad n=1,...,N.
\end{equation}

The initial condition \eqref{eq:HestonABCProsecond} is implemented by integral average.

Next, we come to the ApABC on $\Gamma_{\tilde{S}}$ in \eqref{eq:HestonABCforS}. The main problem here is to approximate the integral in the formula. For $\tau=\tau_n(n\in\{2,...,N\})$, $v=v_j(j\in\{1,...,J-1\})$, we first partition the interval $[0,\tau_n]$ into $[0, \tau_{n-1}]$ and $[\tau_{n-1}, \tau_{n}]$ and make a change of variable to deal with the singularity in the second part. Then we can directly apply the trapezoidal rule to calculate the integrals. The main steps and results are shown below:
\begin{equation}
\begin{aligned}
&\int_{0}^{\tau_{n-1}}e^{-\frac{1}{8}v_j(\tau_n-\tau^\prime)}\left[\frac{1}{4}V(\tilde{S}_{max},v_j,\tau^\prime)+\frac{2}{v}\frac{\partial V}{\partial \tau}(\tilde{S}_{max},v_j,\tau^\prime)\right]\frac{1}{\sqrt{\tau_n-\tau^\prime}}d\tau^\prime\\
\approx&\Delta\tau\left\{\frac{1}{2} e^{-\frac{1}{8}v_j\tau_{n}}\frac{1}{4}(\tilde{S}_{max}-1)\frac{1}{\sqrt{\tau_n}}+\sum_{k=1}^{n-2} e^{-\frac{1}{8}v_j\tau_{n-k}}\left[\frac{1}{4}V_{I,j}^{k}+\frac{2}{v_j\Delta\tau}(V^{k}_{I,j}-V^{k-1}_{I,j})\right]\frac{1}{\sqrt{\tau_{n-k}}}\right.\\
&\left.+\frac{1}{2}e^{-\frac{1}{8}v_j\tau_{1}}\left[\frac{1}{4}V_{I,j}^{n-1}+\frac{2}{v_j\Delta\tau}(V^{n-1}_{I,j}-V^{n-2}_{I,j})\right]\frac{1}{\sqrt{\tau_{1}}}\right\},
\end{aligned}
\end{equation}
by
\[V(\tilde{S}_{max},v,0)=\tilde{S}_{max}-1,\quad \frac{\partial V}{\partial \tau}(\tilde{S}_{max},v,0)=0,\quad\forall 0\leq v<+\infty,\]
with Theorem \ref{thmAsym3}.
Let
\[s=\sqrt{\tau_n-\tau^\prime},\]
then
\begin{equation}
\begin{split}
&\int_{\tau_{n-1}}^{\tau_{n}}e^{-\frac{1}{8}v_j(\tau_n-\tau^\prime)}\left[\frac{1}{4}V(\tilde{S}_{max},v_j,\tau^\prime)+\frac{2}{v}\frac{\partial V}{\partial \tau}(\tilde{S}_{max},v_j,\tau^\prime)\right]\frac{1}{\sqrt{\tau_n-\tau^\prime}}d\tau^\prime\\
=&2\int_{0}^{\sqrt{\Delta\tau}}e^{-\frac{1}{8}v_j s^2}\left[\frac{1}{4}V(\tilde{S}_{max},v_j,\tau_n-s^2)+\frac{2}{v}\frac{\partial V}{\partial \tau}(\tilde{S}_{max},v_j,\tau_n-s^2)\right]ds\\
\approx&\sqrt{\Delta\tau}\left\{\left[\frac{1}{4}V_{I,j}^{n}+\frac{2}{v\Delta\tau}(V_{I,j}^{n}-V_{I,j}^{n-1})\right]+e^{-\frac{1}{8}v_j \Delta\tau}\left[\frac{1}{4}V_{I,j}^{n-1}+\frac{2}{v\Delta\tau}(V_{I,j}^{n-1}-V_{I,j}^{n-2})\right]\right\}.
\end{split}
\end{equation}
We can easily get to the following approximation of ApABC:
\begin{equation}
\begin{aligned}
&\frac{V_{I,j}^{n}-V_{I-1,j}^{n}}{\Delta\tilde{S}}\\
=&-\frac{\tilde{S}_{max}-1}{\tilde{S}_{max}}\frac{1}{8}\sqrt{\frac{v_j\Delta\tau}{2n\pi}}e^{-\frac{1}{8}v_j\tau_{n}}\\
&-\frac{1}{\tilde{S}_{max}}\sqrt{\frac{v_j}{2\pi}}\left\{\sum_{k=1}^{n-2}e^{-\frac{1}{8}v_j\tau_{n-k}}\left[\left(\frac{1}{4}\sqrt{\Delta\tau}+\frac{2}{v_j\sqrt{\Delta\tau}}\right)V_{I,j}^{k}-\frac{2}{v_j\sqrt{\Delta\tau}}V^{k-1}_{I,j}\right]\frac{1}{\sqrt{n-k}}\right.\\
&\left.+\frac{1}{2}e^{-\frac{1}{8}v_j \Delta\tau}\left[\left(\frac{1}{4}\sqrt{\Delta\tau}+\frac{2}{v_j\sqrt{\Delta\tau}}\right)V_{I,j}^{n-1}-\frac{2}{v_j\sqrt{\Delta\tau}}V_{I,j}^{n-2}\right]\right.\\
&\left.+e^{-\frac{1}{8}v_j \Delta\tau}\left[\left(\frac{1}{4}\sqrt{\Delta\tau}+\frac{2}{v_j\sqrt{\Delta\tau}}\right)V_{I,j}^{n-1}-\frac{2}{v_j\sqrt{\Delta\tau}}V_{I,j}^{n-2}\right]\right.\\
&\left.+\left[\left(\frac{1}{4}\sqrt{\Delta\tau}+\frac{2}{v_j\sqrt{\Delta\tau}}\right)V_{I,j}^{n}-\frac{2}{v_j\sqrt{\Delta\tau}}V_{I,j}^{n-1}\right]\right\}\\
&+\frac{1}{2\tilde{S}_{max}}V_{I,j}^{n}+\frac{1}{\tilde{S}_{max}}+\frac{\tilde{S}_{max}-1}{\tilde{S}_{max}}N\left(\frac{\sqrt{v_j\tau_n}}{2}\right),\quad j=1,..,J-1, \quad n=2,...,N.
\end{aligned}
\end{equation}
Furthermore, it can be rewritten as
\begin{equation}
\begin{aligned}
&\left[1+\frac{\Delta\tilde{S}}{\tilde{S}_{max}}\left[\frac{1}{\sqrt{2\pi}}\left(\frac{1}{4}\sqrt{v_j\Delta\tau}+\frac{2}{\sqrt{v_j\Delta\tau}}\right)-\frac{1}{2}\right]\right]V_{I,j}^{n}-V_{I-1,j}^{n}\\
=&-\frac{\Delta\tilde{S}}{\tilde{S}_{max}}\frac{1}{\sqrt{2\pi}}\left\{\sum_{k=1}^{n-3}\left[e^{-\frac{1}{8}v_j\tau_{n-k}}\left(\frac{1}{4}\sqrt{v_j\Delta\tau}+\frac{2}{\sqrt{v_j\Delta\tau}}\right)\frac{1}{\sqrt{n-k}}-e^{-\frac{1}{8}v_j\tau_{n-k-1}}\frac{2}{\sqrt{v_j\Delta\tau}}\frac{1}{\sqrt{n-k-1}}\right]V_{I,j}^{k}\right.\\
&+\left[e^{-\frac{1}{8}v_j\tau_{2}}\left(\frac{1}{4}\sqrt{v_j\Delta\tau}+\frac{2}{\sqrt{v_j\Delta\tau}}\right)\frac{1}{\sqrt{2}}-\frac{3}{2}e^{-\frac{1}{8}v_j \tau_1}\frac{2}{\sqrt{v_j\Delta\tau}}\right]V_{I,j}^{n-2}\\
&\left.+\left[\frac{3}{2}e^{-\frac{1}{8}v_j \tau_1}\left(\frac{1}{4}\sqrt{v_j\Delta\tau}+\frac{2}{\sqrt{v_j\Delta\tau}}\right)-\frac{2}{\sqrt{v_j\Delta\tau}}\right]V_{I,j}^{n-1}\right\}\\
&-\Delta\tilde{S}\frac{\tilde{S}_{max}-1}{\tilde{S}_{max}}\frac{1}{\sqrt{2\pi}}\left(\frac{1}{2}e^{-\frac{1}{8}v_j\tau_{n}}\frac{1}{4}\sqrt{v_j\Delta\tau}\frac{1}{
\sqrt{n}}-e^{-\frac{1}{8}v_j\tau_{n-1}}\frac{2}{\sqrt{v_j\Delta\tau}}\frac{1}{\sqrt{n-1}}\right)\\
&+\frac{\Delta\tilde{S}}{\tilde{S}_{max}}+\Delta\tilde{S}\frac{\tilde{S}_{max}-1}{\tilde{S}_{max}}N\left(\frac{\sqrt{v_j\tau_n}}{2}\right),\quad j=1,..,J-1, \quad n=3,...,N,
\end{aligned}
\end{equation}
and
\begin{equation}
\begin{aligned}
&\left[1+\frac{\Delta\tilde{S}}{\tilde{S}_{max}}\left[\frac{1}{\sqrt{2\pi}}\left(\frac{1}{4}\sqrt{v_j\Delta\tau}+\frac{2}{\sqrt{v_j\Delta\tau}}\right)-\frac{1}{2}\right]\right]V_{I,j}^{2}-V_{I-1,j}^{2}\\
=&-\frac{\Delta\tilde{S}}{\tilde{S}_{max}}\frac{1}{\sqrt{2\pi}}
\left[\frac{3}{2}e^{-\frac{1}{8}v_j \tau_1}\left(\frac{1}{4}\sqrt{v_j\Delta\tau}+\frac{2}{\sqrt{v_j\Delta\tau}}\right)-\frac{2}{\sqrt{v_j\Delta\tau}}\right]V_{I,j}^{1}\\
&-\Delta\tilde{S}\frac{\tilde{S}_{max}-1}{\tilde{S}_{max}}\frac{1}{\sqrt{2\pi}}\left(\frac{1}{2}e^{-\frac{1}{8}v_j\tau_{2}}\frac{1}{4}\sqrt{v_j\Delta\tau}\frac{1}{\sqrt{2}}-\frac{3}{2}e^{-\frac{1}{8}v_j \tau_1}\frac{2}{\sqrt{v_j\Delta\tau}}\right)\\
&+\frac{\Delta\tilde{S}}{\tilde{S}_{max}}+\Delta\tilde{S}\frac{\tilde{S}_{max}-1}{\tilde{S}_{max}}N\left(\frac{\sqrt{v_j\tau_2}}{2}\right),\quad j=1,..,J-1.
\end{aligned}
\end{equation}
Similarly, when $n=1$, the boundary condition can be approximated by
\begin{equation}
\begin{aligned}
&\left[1+\frac{\Delta\tilde{S}}{\tilde{S}_{max}}\left[\frac{1}{\sqrt{2\pi}}\left(\frac{1}{4}\sqrt{v_j\Delta\tau}+\frac{2}{\sqrt{v_j\Delta\tau}}\right)-\frac{1}{2}\right]\right]V_{I,j}^{1}-V_{I-1,j}^{1}\\
=&-\Delta\tilde{S}\frac{\tilde{S}_{max}-1}{\tilde{S}_{max}}\frac{1}{\sqrt{2\pi}}\left(e^{-\frac{1}{8}v_j \tau_1}\frac{1}{4}\sqrt{v_j\Delta\tau}-\frac{2}{\sqrt{v_j\Delta\tau}}\right)+\frac{\Delta\tilde{S}}{\tilde{S}_{max}}+\Delta\tilde{S}\frac{\tilde{S}_{max}-1}{\tilde{S}_{max}}N\left(\frac{\sqrt{v_j\tau_1}}{2}\right),\quad j=1,..,J-1.
\end{aligned}
\end{equation}

To give the approximation of MApABC1  and MApABC2, we only need to deal with the two integrals $I_1(v,\tau)$ and $I_2(v,\tau)$ in \eqref{eq:ABCc1l1} and \eqref{eq:ABCc2l2} since the rest part of the two conditions are both the same as ApABC. We partition the domain of integration into two parts and apply the trapezoidal rule again. We do not go into details for this part but only show the main idea and the approximation of $Q_1(v,\tau)$ and $Q_2(\tilde{S},v,\tau)$ below.

For $Q_1(v_j,\tau_n):=Q_{1,j}^{n}(j\in\{1,...,J-1\},n\in\{1, ..., N\})$, based on \eqref{eq:ABCc1Q}, we use the  upwind scheme
\begin{equation}
\begin{aligned}
Q_{1,j}^{n}\approx& \rho\sigma v_j\tilde{S}_{max}\frac{V_{I,j+1}^n-V_{I-1,j+1}^n-V_{I,j}^n+V_{I-1,j}^n}{\Delta\tilde{S}\Delta v}+\frac{1}{2}\sigma^2v_j\frac{V_{I,j+1}^n-2V_{I,j}^n+V_{I,j-1}^n}{(\Delta v)^2}\\
&+\kappa\left(\eta-v_j\right)^+\frac{V_{I,j+1}^n-V_{I,j}^n}{\Delta v}+\kappa\left(\eta-v_j\right)^-\frac{V_{I,j}^n-V_{I,j-1}^n}{\Delta v},
\quad j=1,...,J-1, \quad n=1,...,N,
\end{aligned}
\end{equation}
and
\[Q_{1,j}^{0}\approx 0, \quad j=1,...,J-1.\]

Compared with $I_1(v,\tau)$, $I_2(v,\tau)$ is not straightforward to deal with due to the integral over $S^\prime \in [\tilde{S}_{max},+\infty)$. For $Q_2(\tilde{S}_i, v_j,\tau_n):=Q_{2,i,j}^{n}(i\in\{1, ..., I-1\},j\in\{1,...,J-1\},n\in\{1, ..., N\})$, based on \eqref{eq:ABCc1Q},we use the central difference scheme
\begin{equation}
\begin{aligned}
Q_{2,i,j}^{n}\approx& \rho\sigma v_j\tilde{S}_{i}\frac{V_{i+1,j+1}^n-V_{i-1,j+1}^n-V_{i+1,j-1}^n+V_{i-1,j-1}^n}{4\Delta\tilde{S}\Delta v}+\frac{1}{2}\sigma^2v_j\frac{V_{i,j+1}^n-2V_{i,j}^n+V_{i,j-1}^n}{(\Delta v)^2}\\
&+\kappa\left(\eta-v_j\right)\frac{V_{i,j+1}^n-V_{i,j-1}^n}{2\Delta v},
\quad i=1, ..., I-1, \quad j=1,...,J-1, \quad n=1,...,N,
\end{aligned}
\label{eq:discreteQ2}
\end{equation}
and
\[Q_{2,i,j}^{0}\approx 0, \quad i=1, ..., I-1,\quad j=1,...,J-1.\]
For $j=1,...,J-1, \ n=1,...,N$, we try to find a curve to fit  $Q_{2,i,j}^{n}$'s corresponding to $\tilde{S}_i$'s with $i=1, ..., I-1$. Then we calculate the integral w.r.t. $S^\prime$ with this curve and approximate the integral w.r.t. $\tau^\prime$ applying the trapezoidal rule in $I_2(v,\tau)$. Referring to the asymptotic solutions in Theorem \ref{thmAsymp}, we choose the curve
\[(c_0+c_1 \ln\tilde{S})\exp{\left(-\frac{(\ln\tilde{S}-\mu_f)^2}{2\sigma_f^2}\right)}\]
with the parameters $c_0, c_1, \mu_f, \sigma_f$ to be determined.

\section{Numerical experiments}\label{sec:num}
We perform numerical experiments and report the results in this section. We apply MApABC1 and MApABC2 on $\tilde{S}_{max}$ and compare the results with those using the original boundary condition by Heston.

The first set of parameters is shown in Table \ref{tabl:Parameters-Ex7}.
	\begin{table}[h]
		\centering
		\caption{Parameters}
		\label{tabl:Parameters-Ex7}
		\begin{tabular}{cr}
			\hline
			Parameter & Value \\
			\hline
			$\kappa$  & 4			\\
			$\eta$       & 0.1		\\
			$\sigma$  & 0.1		\\
			$\rho$ 		 & -0.5		\\
			$T$ 			 & 2			\\
			\hline
		\end{tabular}
	\end{table}
We compute the partial derivative of the 0-order asymptotic solution $V_0(\tilde{S},v,\tau)$ in \eqref{eq:HestonAsym0} w.r.t. $v$ to investigate approximately how the partial derivative of the solution $V(\tilde{S},v,\tau)$ behaves as $\tilde{S}$ and $v$ changes.
	\begin{figure}[htbp!]
		\centering
			\includegraphics[width=0.9\linewidth]{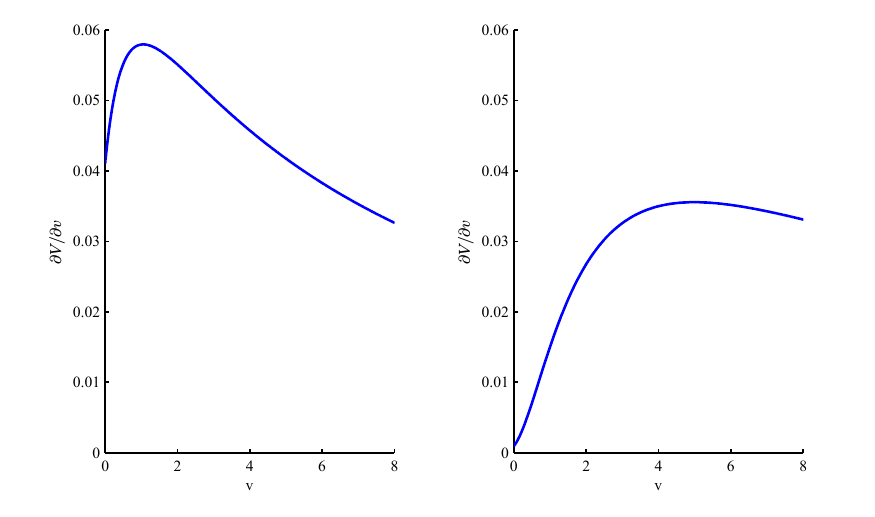}
		\caption{$\frac{\partial V_0}{\partial v}(\tilde{S},v,\tau)$ on $\tilde{S}=2$ (left) and $\tilde{S}=4$ (right) where $v$ ranges from 0 to 8.}
		\label{graph:VvSfixed-Ex7}
	\end{figure}

As is shown in Figure \ref{graph:VvSfixed-Ex7}, for a fixed $\tilde{S}$, $\frac{\partial V_0}{\partial v}(\tilde{S},v,\tau)$ increases first and then decreases with $v$ growing larger. When $\tilde{S}$ is small, $\frac{\partial V_0}{\partial v}(\tilde{S},v,\tau)$ increases faster and decreases faster as well than it does with a large $\tilde{S}$. We need to take into account both the accuracy and computational costs when looking for a proper set of $S_{max}$ and $v_{max}$. Since we apply the homogeneous Neumann boundary condition on $v_{max}$, we tend to choose a smaller $\tilde{S}_{max}$ and a larger $v_{max}$ to make the partial derivative w.r.t $v$ small enough. Besides, a large $\tilde{S}_{max}$ is needed to reduce the error caused by the boundary condition on the $\tilde{S}$-direction. In this case, we set
\[\tilde{S}_{max}=4, \quad v_{max}=4.\]
When $\tilde{S}=4$, we can see in Figure \ref{graph:VvSfixed-Ex7} that $v$ changes slightly when it is greater than 4.

Letting
\[\Delta\tau = \Delta \tilde{S}=\Delta v:=h,\]
we conduct experiments using the original boundary condition  by Heston, MApABC1 and MApABC2 and denote the solutions by $V^{(\Lambda)}$($\Lambda \in$ \{OriginalBC, MApABC1, MApABC2\}). We use the 2nd-order asymptotic solution in  \eqref{eq:HestonAsymall} denoted by $V^{(asym)}$
as a reference and calculate the 2-norm relative errors of the numerical solutions with respect to $V^{(asym)}$:
\[Error^{(\Lambda)}=\frac{\|V^{(\Lambda)}-V^{(asym)}\|_2}{\|V^{(asym)}\|_2},\]
where $\Lambda \in$ \{OriginalBC, MApABC1, MApABC2\}. We report the errors corresponding to different steps in Table \ref{tabl:Errors-Ex7}. We find that all of the three errors decrease as the step gets smaller but they all change more and more slightly. The total error may be dominated by the systematic error when the step $h$ is small enough. Comparing the three boundary conditions, we find that MApABC1 behaves better than the original boundary condition by Heston and MApABC2 is the best among the three in terms of the error. The error is reduced by about 61\% when we replace the original boundary condition by Heston with MApABC1 for the step $h=0.4$. When $h$ gets smaller, the reduction ratio decreases and gets to about 51\% when $h=0.025$. With MApABC2, the error is reduced by 68\% compared with the original boundary condition by Heston when $h =0.4$ and the ratio increases to 97\% when $h=0.025$.

	\begin{table}[h]
		\centering
		\caption{Relative errors}
		\label{tabl:Errors-Ex7}
		\begin{tabular}{crrrrrr}
			\hline
			Steps 			& $h=0.4$ & $h=0.2$ & $h=0.1$ & $h=0.05$ & $h=0.025$\\
			\hline
			OriginalBC & 0.01223 & 0.00929 & 0.00827 & 0.00787 & 0.00768\\
			MApABC1 & 0.00478 & 0.00395 & 0.00386 & 0.00382 & 0.00377\\
			MApABC2 & 0.00396& 0.00156 & 0.00063 & 0.00033 & 0.00020\\
			\hline
		\end{tabular}
	\end{table}

Fix $h=0.1$. We have mentioned in the last section that to approximate the integral $I_2(v,\tau)$ in MApABC2, we select a class of curves to fit the discrete values of $Q_2(\tilde{S},v,\tau)$ computed by \eqref{eq:discreteQ2}. Here, we draw the discrete and fitted values of  $Q_2(\tilde{S},v,\tau)$ on both $v=0.8, \tau=0.4$ and $v=3.6, \tau=1.5$ as examples in Figure \ref{graph:fit-Ex7}. It is easy to find that the class of curves we have chosen can give a good fit of the discrete value of $Q_2(\tilde{S},v,\tau)$ so that we can obtain a satisfying approximation of $I_2(v,\tau)$.

	\begin{figure}[htbp!]
		\centering
			\includegraphics[width=0.9\linewidth]{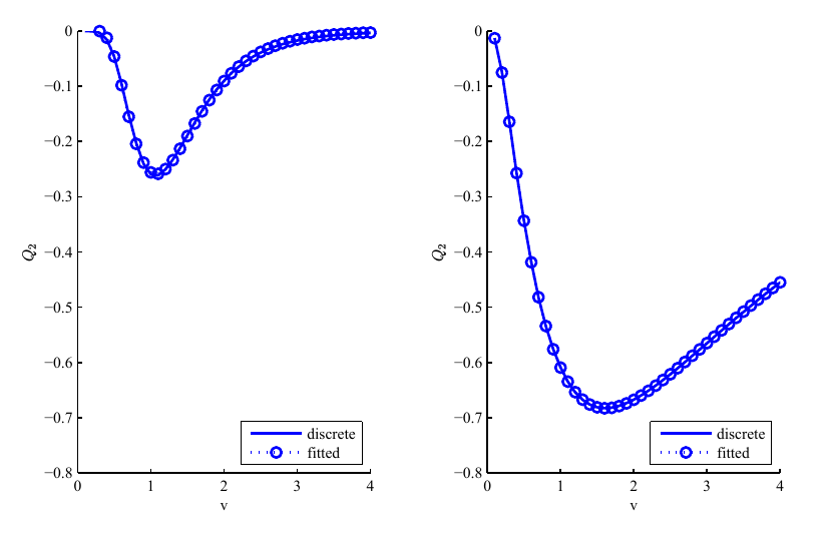}
		\caption{The discrete and fitted values of $Q_2(\tilde{S},v,\tau)$ on $v=0.8, \tau=0.4$ (left) and $v=3.6, \tau=1.5$ (right).}
		\label{graph:fit-Ex7}
	\end{figure}

With $h=0.1$ fixed, Figure \ref{graph:V_Sfixed-Ex7} shows the 2nd-order asymptotic solution and the numerical solutions with the original boundary condition by Heston, MApABC1 and MApABC2 on $\tilde{S}=4$ in the numerical experiments setting $\tilde{S}_{max}=4$ and $\tilde{S}_{max}=6$. Correspondingly, the errors of the numerical solutions with the three different ABCs w.r.t. the reference solution are reported in Figure \ref{graph:Error_Sfixed-Ex7}. As $v$ increases, the errors of  the results with the three ABCs grow larger in general. When we choose $\tilde{S}_{max}=4$, the solution of the experiment using MApABC2 is the closest to the reference solution, MApABC1 is the second and the original boundary condition by Heston behaves the worst. For a greater $\tilde{S}_{max}$, the errors of the experiments using the three boundary conditions all decrease significantly and MApABC2 remains the best.

	\begin{figure}[htbp!]
		\centering
			\includegraphics[width=0.9\linewidth]{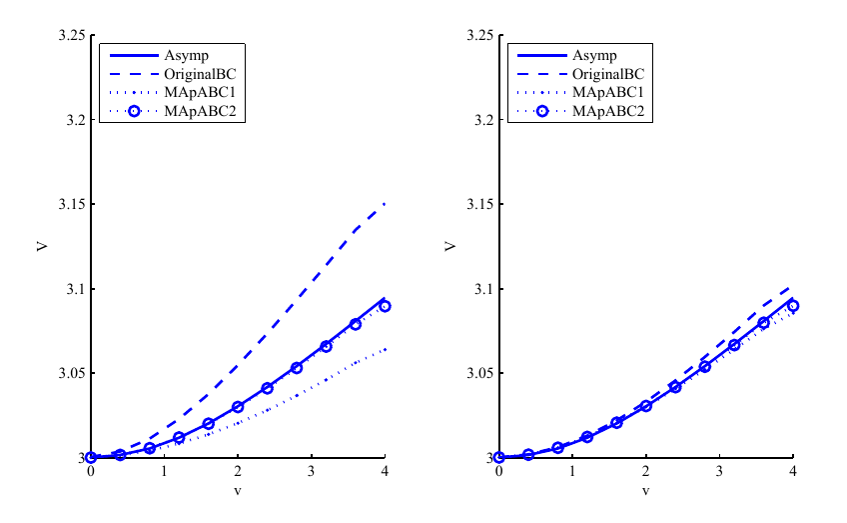}
		\caption{The 2nd-order asymptotic solution and the numerical solutions with different ABCs on $\tilde{S}=4$ with $\tilde{S}_{max}=4$ (left) and $\tilde{S}_{max}=6$ (right).}
		\label{graph:V_Sfixed-Ex7}
	\end{figure}
	\begin{figure}[htbp!]
		\centering
			\includegraphics[width=0.9\linewidth]{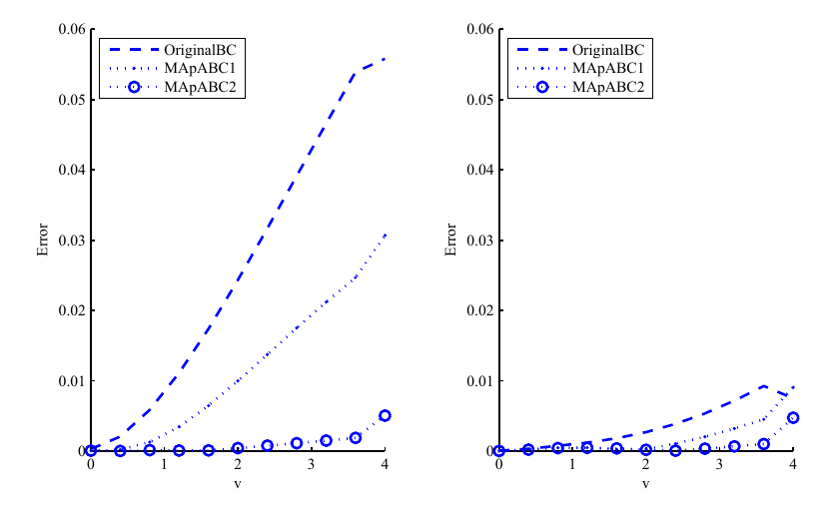}
		\caption{The errors of the numerical solutions with different ABCs w.r.t the 2nd-order asymptotic solution on $\tilde{S}=4$ with $\tilde{S}_{max}=4$ (left) and $\tilde{S}_{max}=6$ (right).}
		\label{graph:Error_Sfixed-Ex7}
	\end{figure}
	
We also choose $v_{max}=4$ and $v_{max}=6$ and show the 2nd-order asymptotic solution and the numerical solutions with the three boundary conditions on $v=4$ in Figure \ref{graph:V_vfixed-Ex7}. The corresponding errors are reported in Figure \ref{graph:Error_vfixed-Ex7} as well. The four solutions seem close to each other for both $v_{max}=4$ and $v_{max}=6$. Still, MApABC2 has a clear advantage over the other two in terms of the error. The errors have almost no change when $v_{max}$ is increased from 4 to 6, which indicates that it is enough to set $v_{max}=4$ on the v-direction to truncate the domain and use the boundary condition.
	
	\begin{figure}[htbp!]
		\centering
			\includegraphics[width=0.9\linewidth]{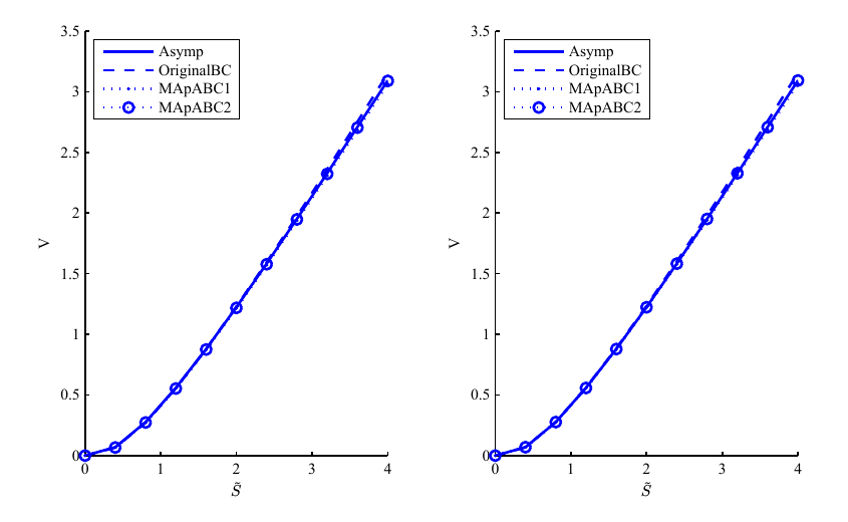}
		\caption{The 2nd-order asymptotic solution and the numerical solutions with different ABCs on $v=4$ with $v_{max}=4$ (left) and $v_{max}=6$ (right).}
		\label{graph:V_vfixed-Ex7}
	\end{figure}

	\begin{figure}[htbp!]
		\centering
			\includegraphics[width=0.9\linewidth]{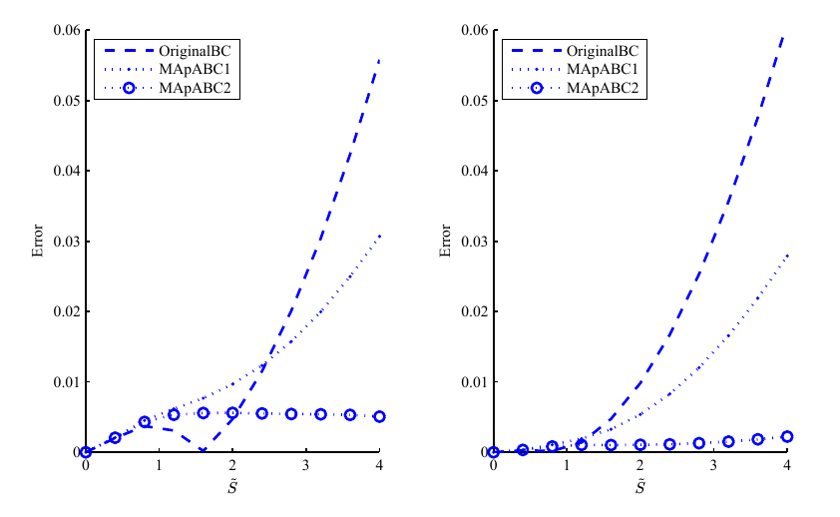}
		\caption{The errors of the numerical solutions with different ABCs w.r.t the 2nd-order asymptotic solution on $v=4$ with $v_{max}=4$ (left) and $v_{max}=6$ (right).}
		\label{graph:Error_vfixed-Ex7}
	\end{figure}
	
Table \ref{tabl:Parameters-Ex6} shows the second set of parameters.
	\begin{table}[ht]
		\centering
		\caption{Parameters}
		\label{tabl:Parameters-Ex6}
		\begin{tabular}{cr}
			\hline
			Parameter & Value \\
			\hline
			$\kappa$ & 0.005\\
			$\eta$ & 0.5\\
			$\sigma$ & 0.01\\
			$\rho$ & 0.5\\
			$T$ & 2\\
			\hline
		\end{tabular}
	\end{table}

Similarly, we choose $S_{max}=4$ and $v_{max}=4$ and present the relative errors using the three different ABCs with different steps in Table \ref{tabl:Errors-Ex6}. For $h=0.4$, MApABC1 and MApABC2 reduce the error both by about 81\% compared to the original boundary condition by Heston while the error of MApABC2 is slightly larger than MApABC1. With the step $h$ getting smaller, the error of MApABC2 decreases faster than the error of MApABC1. For $h=0.025$, the reduction ratios of the error both grow to nearly 99\%. In this case, the improvement by using MApABC1 or MApABC2 is significant.

	\begin{table}[ht]
		\centering
		\caption{Relative errors}
		\label{tabl:Errors-Ex6}
		\begin{tabular}{crrrrrr}
			\hline
			Steps & $h=0.4$ & $h=0.2$ & $h=0.1$ & $h=0.05$ & $h=0.025$\\
			\hline
			OriginalBC & 0.04037 & 0.03765 & 0.03656 & 0.03616 & 0.03600\\
			MApABC1 & 0.00784 & 0.00276 & 0.00096 & 0.00061 & 0.00052\\
			MApABC2 & 0.00787 & 0.00281 & 0.00097 & 0.00044 & 0.00037\\
			\hline
		\end{tabular}
	\end{table}

With $h=0.1$ fixed, we try to expand the region by choosing larger $\tilde{S}_{max}$ and $v_{max}$ and show the relative errors in Table \ref{tabl:Cost-Ex6}. When we set $\tilde{S}_{max}$ 10 times larger, the relative errors with the three ABCs all get smaller. However, the error of the solution applying the original boundary condition by Heston is still larger than the error of those using MApABC1 and MApABC2 but calculated in the small region, which indicates that the ABCs we propose can help to reduce the computational cost considerably by computing only in a small region while being able to meet the accuracy requirements. With $v_{max}$ 10 times larger, the error corresponding to the original boundary condition has almost no change but grows larger slightly. The reason should be that the imprecision caused by the boundary condition on $\tilde{S}_{max}$ is not improved by a larger $v_{max}$.

	\begin{table}[ht]
		\centering
		\caption{Computational cost}
		\label{tabl:Cost-Ex6}
		\begin{tabular}{crrrrrrrr}
			\hline
			Domains & & $S_{max}=4$ & $S_{max}=40$ & $S_{max}=4$\\
               					& & $v_{max}=4$ & $v_{max}=4$ & $v_{max}=40$\\
			\hline
			OriginalBC & Error & 0.03656 & 0.00099  & 0.03665\\
			MApABC1 & Error &  0.00096 & 0.00077 & 0.00074\\
			MApABC2 & Error &  0.00097 & 0.00077 & 0.00078\\
 			\hline
		\end{tabular}
	\end{table}


The third set of parameters are shown in Table \ref{tabl:Parameters-Ex8}.
	\begin{table}[ht]
		\centering
		\caption{Parameters}
\label{tabl:Parameters-Ex8}
\begin{tabular}{cr}
\hline
Parameter & Value \\
\hline
$\kappa$ & 2\\
$\eta$ & 0.3\\
$\sigma$ & 0.05\\
$\rho$ & 0\\
$T$ & 2\\
\hline
\end{tabular}
\end{table}
Here, we choose $S_{max}=8$, $v_{max}=4$. In Table \ref{tabl:Errors-Ex8}, MApABC2 has a much better performance than the results with the original boundary condition by Heston and MApABC1.

\begin{table}[ht]
\centering
\caption{Relative errors}
\label{tabl:Errors-Ex8}
\begin{tabular}{crrrrrr}
\hline
Steps & $h=0.4$ & $h=0.2$ & $h=0.1$ & $h=0.05$ & $h=0.025$\\
\hline
OriginalBC & 0.00573 & 0.00510& 0.00489 & 0.00481 &  0.00476\\
MApABC1 & 0.00236 & 0.00192 & 0.00185 & 0.00176 & 0.00169\\
MApABC2 & 0.00192 & 0.00090 & 0.00058 & 0.00041 & 0.00030\\
\hline
\end{tabular}
\end{table}

Besides, we present the absolute errors of the numerical solution and also the Greeks including Delta, Gamma and Vega in Figure \ref{graph:ErrorofGreeks-Ex8} corresponding to the three ABCs on the boundary $\tilde{S}_{max}=8$. Greeks have drawn much attention in industry due to their important role in trading and hedging and the computational accuracy is often highly required. We can see easily in the graph that all of the three Greeks are calculated better with the two proposed ABCs especially MApABC2, reflecting the effectiveness and efficiency of our method.

\begin{figure}[htbp!]
	\centering
	\subfigure{
		\includegraphics[width=0.9\linewidth]{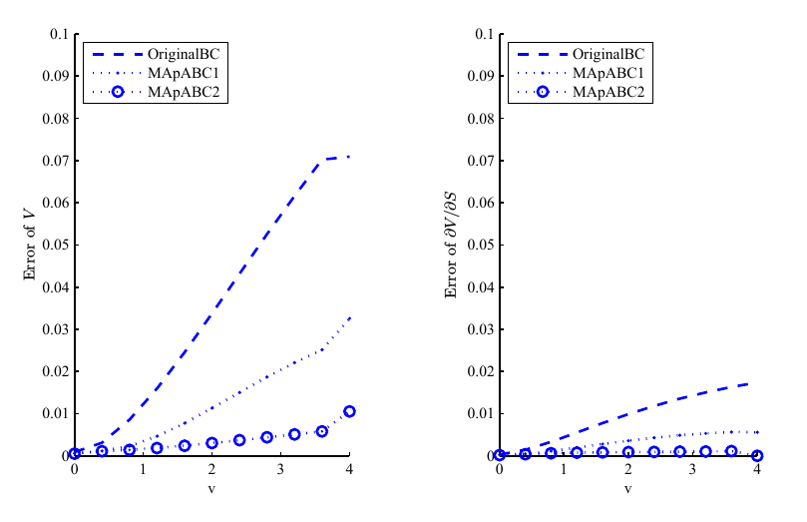}
	}
	\subfigure{
		\includegraphics[width=0.9\linewidth]{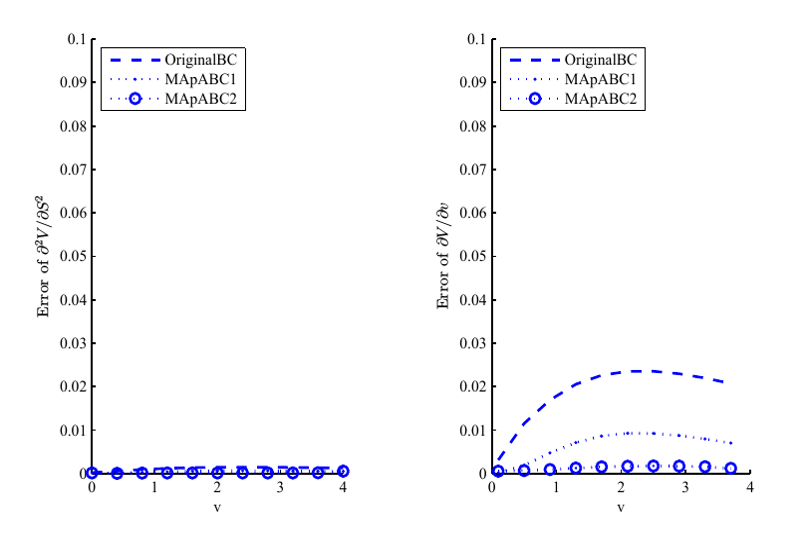}
	}
	\caption{The errors of the numerical solutions with different ABCs, Delta, Gamma and Vega w.r.t the 2nd-order asymptotic solution on $\tilde{S}=8$.}
	\label{graph:ErrorofGreeks-Ex8}
\end{figure}


\section{Conclusions}\label{sec:con}

We derive the asymptotic solution to the problem for pricing the European option under the Heston model in terms of the volatility of variance $\sigma$ and studied the asymptotic behaviors of the derivatives of the price. Based on these asymptotic analyses, we build several artificial boundary conditions for the problem. Numerical study has shown that these ABCs have a better performance than the original boundary condition by Heston and can achieve a good accuracy on a small computational domain for both the option price and the Greeks. A more efficient approach to calculate the integral in the ABCs and boundary conditions on the $v$-direction will be investigated further in the future.

\bibliographystyle{siam}
\bibliography{ref}

\end{document}